\documentclass[11pt]{article}
\topmargin-1.5cm
\oddsidemargin0in
\evensidemargin0in
\textwidth6.5in
\textheight9in
\linespread{1.6}
\parindent0cm
\parskip6pt
\usepackage{graphicx}
%
\usepackage{latexsym}
\usepackage{amsmath}
\usepackage{amssymb}
\usepackage{amsfonts}
\usepackage[all]{xy}
\usepackage{makeidx}
\input{epsf.tex}
\input xy
\xyoption{all}
\usepackage{enumerate}
\usepackage{natbib}
\usepackage{float}
\def\cite{\citet}
\newtheorem{lemma}{Lemma}
\newtheorem{proposition}{Proposition}
\newtheorem{theorem}{Theorem}

\bibliographystyle{abbrvnat}
\input xy

\xyoption{all}

\newcommand{\qed}{\hfill \mbox{\raggedright \rule{.07in}{.1in}}}
\newenvironment{proof}{\vspace{1ex}\noindent{\bf Proof}\hspace{0.5em}}
	{\hfill\qed\\[5pt]}%
\begin{document} 
\title{Strategic Equilibria in Queues \\with Dynamic Service Rate and Full Information\footnote{This research has been co-financed by the European Union (European Social Fund - ESF) and Greek national funds through the Operational Program ''Education and Lifelong Learning'' of the National Strategic Reference Framework (NSRF)-Research Funding Program: Thales-Athens University of Economics and Business-New Methods in the Analysis of Market Competition: Oligopoly, Networks and Regulation.}}
\author{Apostolos Burnetas\footnote{Corresponding Author} and Yiannis Dimitrakopoulos\\
  Department of Mathematics\\
  National and Kapodistrian University of Athens\\
  Panepistemiopolis, Greece 15784\\
  email: aburnetas@math.uoa.gr, dimgiannhs@math.uoa.gr} \date\today

\maketitle

\begin{abstract}
We consider the problem of individual customer equilibrium for joining a single server Markovian queue, with state-dependent, nondecreasing service rates. Customers are homogeneous and make join/balk decisions to maximize their expected net benefit, having full information on the current queue length upon arrival. We develop a system of linear equations for the computation of the expected delay function of a customer for any symmetric joining strategy and derive necessary and sufficient equilibrium conditions. For pure and mixed threshold symmetric equilibrium strategies we establish a monotonicity property of the delay function, which leads to a finite algorithm for identifying equilibria. We also characterize the equilibrium strategies when the service rate policy is itself of threshold type between a low and a high service rate. Finally in a set of numerical experiments we show that in general there exist multiple symmetric threshold equilibrium strategies. 

\vskip0.5cm
\noindent {\bf Keywords:} Strategic Customers;
Dynamic Service Control; Observable Model; Full Information; Pure and Mixed Threshold Equilibria
\end{abstract}

\section{Introduction}
\label{sec-Introduction}

The dynamic service  adjustment of service rate, when technically feasible, is a useful tool for control of congestion and customer delays in a queueing system. There exist several ways to implement service rate control, for example server activation and idling in multi-server systems or service speed adjustments in cases where the service is provided by a machine. In general, the objective of a service control policy is to achieve a balance between excessive customer delays when service speed is low and increased operational and maintenance costs when it is high. 

In systems where customers behave strategically and decide whether to join the system or not based on their anticipated delay, the effect of service rate variations becomes more complicated, because varying the service rate affects congestion and delays both directly, as well as indirectly since it also affects the arrival rate. 

An additional factor that affects performance in the  strategic customer framework is the level of information available to arriving customers. In most systems with physical customer presence  such as banks, public services, health care providers, etc., arriving customers can observe the congestion level before making any joining decisions.  The possibility for potential customers to observe the queue length, in addition to being aware of the  service policy, may have a significant effect on the effective arrival stream. Since a customer's delay depends on the decisions
of other customers, the incoming stream is the result of an equilibrium strategy of the game formulated by the customer strategic behavior.

The present paper proposes a direct model to analyze the effect of varying service rate on customer equilibrium behavior under full information. More specifically, we analyze the customer equilibrium behavior for joining a single server Markovian queue where the service rate is non-decreasing in the number of customers in the system. We analyze the observable case of this model where arriving customers are aware for both the service policy and the current queue length upon arrival.

Following the framework of \cite{naor}, we first prove that mixed strategies under which customers always join with positive probability cannot be equilibria. For strategies such that customers balk with probability one at a finite state, we show that the corresponding Markov chain for the number of customers in the system has a unique finite recurrent class. By restricting attention to customer strategies that satisfy the equilibrium conditions only for states in the recurrent class, on the one hand the analysis is considerably simplified, and, on the other hand, we do not lose much of the generality of the model, since in the infinite horizon framework the contribution of the transient states to the long-run average costs and profits is zero. 

In this paper, we restrict the analysis to the class of  threshold strategies, i.e. strategies where arriving customers 
join if and only if the number of customers they find in the system is below a threshold value, possibly randomizing at the threshold state. Strategies of this type  are intuitive and further simplify the performance as well as the equilibrium analysis.  Moreover, a thorough numerical investigation did not reveal any equilibrium strategies of non-threshold structure. 

Under the class of threshold strategies, we prove that an entering customer's expected delay is increasing in the number of customers already present. Although this property is obvious when the service rate is constant, under the increasing service rate assumption it becomes less intuitive; indeed, when more customers are present, the server works at higher rate thus decreasing the congestion faster. Using coupling arguments we show that the monotonicity nevertheless holds. 
This property introduces a significant simplification on the equations of equilibrium  and allows the construction of a finite algorithm to identify all pure threshold strategies.

 An implication of this property is that the possible threshold values for an equilibrium strategy are restricted to a finite range. As a result, we can construct an efficient search algorithm to identify all equilibrium strategies of pure threshold type. Moreover, considering the case of a threshold service policy where the service rate is dynamically adjusted between a low and high value according to a service threshold, we show that there may be at most one pure threshold equilibrium strategy when strategy's threshold is below the service threshold, whereas, on the contrary, numerical experiments demonstrate that there may be multiple equilibria when we consider strategies with threshold above the service threshold value. 

The research area that analyzes the strategic behavior of customers in a queueing system and its implications on the system performance has experienced considerable growth in recent years. The seminal papers of \cite{naor} and \cite{edelsonhildebrand}, analyze the simple model of an M/M/1 queue for the observable and unobservable case, respectively. Numerous variations of the original models under various levels of information have been studied since. The monographs of \cite{hassinhaviv}, \cite{stidham_book} and \cite{hassin-book2016} provide extensive reviews of models and results on queueing games and the economic analysis of service systems. Among the models that have been developed and analyzed, many include some varying service rate characteristics indirectly, while  the state information  may or may not be available to potential customers.
%
\cite{armonymaglaras}
analyze the impact of announcing anticipated delays and providing a
call back option on customer joining behavior, in a call center with
two service modes. \cite{debo_2011} consider the strategic behavior of consumers  who join in an observable single server Markovian system with variable service rate and buy a product with varying quality. They show that the customer equilibrium strategy is of threshold type with threshold depending on product quality, and, that under certain conditions, a high-quality firm may serve in a slower mode than a low-quality firm. In this case the service policy can be a valuable signaling device for a high-quality firm. Moreover, \cite{guo_2011} studied models under partial information on service time distribution and they show that as the level of available information increases, more customers join. Thus, for a central planner it is better to give partial information to customers under individual welfare maximization whereas it is  more beneficial to reveal full information in profit maximization. In  \cite{hassinkoshman_2017}, the authors suggest a new model for issuing high-low delay announcements to customers who join a typical $M/M/1$ queue combined with a pricing polisy that charges a single price equal to the customer expected net benefit when the number of customers announced is below Naor's threshold. 
 
Other service systems with varying service rate characteristics are vacation queues where the server turns off and reactivates after a random time as in \cite{burnetaseconomou} and \cite{sunguo_2010} or the server resumes service after a fixed number of arrivals according to a threshold service policy as in \cite{vacation_hass,guo_hassin_vac2} and \cite{guoli_2013}. These papers consider customer equilibrium behavior with respect to several levels of information about the queue length and the state of the server with or without delay sensitive customers. In a make-to-stock production environment, \cite{li2_guo_song_2017} study customer strategic behavior on buying a product, where production is made under a bi-level threshold vacation policy, which engages production when the number of waiting customers reaches a certain level and ends it when the inventory level reaches a certain quantity. For this problem, the authors formulate a Stackelberg game between the production manager and potential customers and derive the customer equilibrium strategy, as well as the  optimal vacation policy for the firm. 

Another  facet of varying service rate policies, where the flexibility of increasing server speed results in more frequent departures, is to serve customers in batches instead instead of one by one. In this direction, \cite{economou_manou2013, manou_econ_kar_2014, manou_canb_kar_2017} and \cite{bountalieconomou2017} have analyzed the customer balking behavior when the service facility can serve a group of waiting customers either at once, i.e. a clearing system, or according to a threshold based batch service policy. In the latter case, the server works on a batch of a fixed size and starts working when this size is filled. The clearing system has been studied in \cite{economou_manou2013,manou_econ_kar_2014,manou_canb_kar_2017} where the system removes all present customers periodically and determined equilibrium strategies under a random environment and various levels of information. On the other hand, \cite{bountalieconomou2017} studied a model with batch services of a fixed size under two information scenarios. In the unobservable case the authors prove that there exist multiple equilibria, whereas in the observable case the customer behavior is affected by the balking behavior of future arrivals, and, thus dominant strategies are no longer available as in the single customer service.   

Finally, the unobservable case of the model in this paper is analyzed in \cite{Dim_Burnetas2011}, where customers are aware of the queue length upon arrival, but they all know the service policy induced by the service manager. It is shown that there exist at most three equilibrium strategies. A similar model, where the administrator employs a threshold-based staffing policy which activates or deactivates additional servers with respect to the system congestion, has been analyzed in \cite{guozhang_2013}. It is also shown that in general there exist multiple equilibrium strategies. In both papers the existence of multiple equilibria in general is due to the non monotonic behavior of customers expected sojourn time, and, as a result, the presence of both Avoid-the-Crowd and Follow-the-Crowd customer behavior in the unobservable case. On the other hand, in \cite{guozhang_2013}, revealing the service mode, i.e. whether additional servers are activated or not, results in a socially undesirable joining behavior where all customers join the costly system when all servers are on, and join the free system when some servers are deactivated. 

The contribution of this paper lies on the analysis of customers equilibrium behavior in a fully observable queueing Markovian system  under a general service policy with service rates being non-decreasing in the number of customers in the system where customers observe the queue length and, thus, also the service mode, upon arrival. Specifically, we show that under a general service policy of dynamically increasing the service rate as the queue length increases and pure threshold join/balk strategies, customer's expected delay is non-decreasing in the number of customers. The latter allows us to develop an efficient method to identify the equilibrium thresholds. Furthermore, we derive results on the existence of mixed threshold equilibrium strategies and relate them to the pure threshold case. Finally, we show using a special case of a threshold-based service rate policy that there are cases where there exist more than one equilibrium threshold strategies.

The rest of the paper is organized as follows. In Section \ref{sec-Model} we introduce the
 model and the corresponding customer strategic behavior problem.
In Section \ref{sec-Equilibrium} we present the equilibrium analysis of the model deriving necessary and sufficient conditions for equilibria and the expected waiting time of an arriving customer.
In Sections \ref{sec-pure} and \ref{sec-Mixed} we perform equilibrium analysis for the classes of pure and mixed  threshold strategies, respectively. In Section \ref{sec-thres} the equilibrium analysis is specialized to the case of a single-threshold service rate policy. Numerical experiments with respect to the service reward are presented in Section \ref{sec-comp}. Section \ref{sec-Summary} concludes.

\section{Model Description}\label{sec-Model}
\noindent We consider a single server Markovian queue under the FCFS
discipline, where potential customers arrive according to a Poisson
process with rate $\lambda$. The system administrator varies the service rate according to the number of customers present in the system at any time instant.  Specifically, the service policy is defined by a non-decreasing sequence of instantaneous service rates $\mu_n,;n\geq0$, where $n$ denotes the number of customers in the system. We assume that $\lim_{n\rightarrow\infty}\mu_n=M<\infty$. Finally, there are no service rate switching costs. 
 
Arriving customers are assumed identical and homogeneous. They observe the number of customers already present and are aware of the service policy. They make join decisions upon arrival, in order to maximize their expected net benefit, thus, they are risk neutral. Every customer who joins the system, receives a fixed reward $R\geq0$ upon service completion and incurs a waiting cost $C>0$ per time unit until departure, since she cannot renege after entering the system. 

In the observable model, arriving customers are aware of the total number of present customers in the system upon arrival, and, thus their join decisions depend on $n$. Since the join decisions of individual customers affect the system delay, and thus the benefit of all customers, the decision problem corresponds to a game among customers. We restrict attention to symmetric Nash equilibrium strategies. Specifically, given that potential customers are aware of the actual system state, an arriving customer has two pure strategies, either to join the system or balk. However, for the equilibrium analysis, mixed strategies also have to be considered. A mixed strategy is defined as a  probability vector $\underline{p}=(p_0,p_1,\ldots,p_n,\ldots),\;p_n\in[0,1]$, where $p_n$ denotes the join probability of a customer when there are $n$ customers present upon arrival. Let $\Pi=\{\underline{p}:p_n\in[0,1]\}=[0,1]^{\infty}$ be the set of mixed strategies. Under any strategy $p\in\Pi$, the Markov chain that describes the evolution of the number of customers in the system is a birth-and-death process with birth rates $\lambda_n=\lambda p_n$ for $n\geq0$ and death rates $\mu_n$, for $n\geq1$.

Consider an arriving customer who finds $n$ customers in the system upon arrival, and follows mixed strategy $\underline{q}$, whereas all other customers follow mixed strategy $\underline{p}$. Letting $W(n;\underline{p})$ be his/her expected sojourn time in the system, the tagged customer's expected net benefit, given by $U_n(\underline{q};\underline{p})$, can be expressed as
\begin{equation}\label{expnetbf}U_n(\underline{q};\underline{p})=q_n\left[R-CW(n;\underline{p})\right]=Cq_n\left[\tilde{R}-W(n;\underline{p})\right],\end{equation}
where $\tilde{R}=\frac{R}{C}$ expresses the relative importance of the service reward vs the cost of waiting.

Then, her best response to a strategy $\underline{p}$ is given by
\begin{equation}\label{bestrespfct} q^{B}_n(\underline{p})= \left\{\begin{array}{ll}0,&\mbox{if }  \tilde{R}-W(n;\underline{p})<0\\
\in[0,1],&\mbox{if }  \tilde{R}-W(n;\underline{p})=0\\1,&\mbox{if }  \tilde{R}-W(n;\underline{p})>0\end{array}\right.,
\end{equation}

and, a mixed strategy $\underline{p}^e$ is a symmetric Nash equilibrium strategy, if $U_n(\underline{p}^e,\underline{p}^e)\geq U_n(\underline{q},\underline{p}^e)$ for any strategy $\underline{q}\in\Pi$ and any state $n$, i.e. $\underline{p}^e$ is the best
response against itself, since if all customers agree to follow $\underline{p}^e$, no one can benefit
from changing it.

In the remainder of the paper, we refer to $W(n;\underline{p})$ as the waiting time or the delay function.

\section{Equilibrium Analysis} \label{sec-Equilibrium}
\noindent In this section, we consider the problem of individual equilibria. Let $\Omega$ be the set of symmetric Nash equilibrium strategies. Since $U_n(\underline{q};\underline{p})$ depends on $\underline{q}$ only through the probability $q_n$, from \eqref{expnetbf} and \eqref{bestrespfct} it follows that any equilibium strategy satisfies:
\begin{eqnarray*}
&&\hbox{If }\tilde{R}-W(n;\underline{p}^e)<0\Rightarrow q^{B}_n(\underline{p}^e)=0\Rightarrow p^e_n=0.\label{necequil0}\\
&&\hbox{If }\tilde{R}-W(n;\underline{p}^e)>0\Rightarrow q^{B}_n(\underline{p}^e)=1\Rightarrow p^e_n=1.\label{necequil1}\\
&&\hbox{If}\;p^e_n\in(0,1)\Rightarrow \tilde{R}-W(n;\underline{p}^e)=0.\label{necequilrand}
\end{eqnarray*}

From the above, we obtain that a mixed strategy $\underline{p}^e$ is equilibrium, i.e.,
$\underline{p}^e\in \Omega$, if and only if the following inequalities hold.
\begin{eqnarray}
&\tilde{R}-W(n;\underline{p}^e)\leq0,&\hbox{for all } n \hbox{ such that }p^e_n=0 .\label{suf_nec_equil0}\\
&\tilde{R}-W(n;\underline{p}^e)\geq0,&\hbox{for all } n \hbox{ such that }p^e_n=1.\label{suf_nec_equil1}\\
&\tilde{R}-W(n;\underline{p}^e)=0,&\hbox{for all } n \hbox{ such that }p^e_n\in(0,1).\label{suf_nec_equilrand}
\end{eqnarray}

We next develop further results that simplify these equilibrium conditions in the sense that for an equilibrium these inequalities must be valid only for a finite number of states. Specifically, we first show that for any mixed strategy there exists a finite state where joining is not optimal. Thus, the equilibrium analysis can be restricted  to strategies where balking is prescribed in a finite state. Furthermore, for strategies of this type the corresponding Markov chain has a single ergodic class, which implies that in steady state the equilibrium conditions in \eqref{suf_nec_equil0} - \eqref{suf_nec_equilrand} must be satisfied only for the ergodic states.

For a mixed strategy $\underline{p}=(p_0,p_1,\ldots)\in\Pi$, let 
\begin{equation}
\label{ne}n_{b}(\underline{p})=\min\{n:\;\;\tilde{R}-W(n;\underline{p})<0\},
\end{equation}
and
\begin{equation}
\label{no}n_0(\underline{p})=\min\{n:p_n=0\},
\end{equation}
denote the first state where the expected net benefit from joining is strictly negative and the first state where strategy $\underline{p}$ prescribes balking, respectively.

In Lemma \ref{first res}, we establish bounds on the waiting time function $W(n;\underline{p})$, which implies an ordering between $n_b$ and $n_0$ in equilibrium.

\begin{lemma}\label{first res}
\begin{itemize}
\item[1.]For any mixed strategy $\underline{p}\in\Pi$, the following hold:
\begin{itemize}
\item[i.] $\frac{n+1}{M}\leq W(n;\underline{p})\leq\frac{n+1}{\mu_1},$ for any $n$.
\item[ii.] $n_{b}(\underline{p})<\infty$.
\end{itemize}
\item[2.] For any equilibrium strategy $\underline{p}^e$, $n_0(\underline{p^e})\leq n_{b}(\underline{p^e})<\infty$.
\end{itemize}
\end{lemma}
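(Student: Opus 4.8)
The plan is to treat the three assertions in order, since part 1(ii) follows immediately from the upper bound in 1(i), and part 2 combines 1(ii) with a monotonicity-in-$\underline{p}$ argument for the waiting time. I will focus on 1(i), which is the substantive claim.

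For 1(i), fix a state $n$ and a strategy $\underline{p}$, and condition on the tagged customer entering, so that there are $n+1$ customers in the system immediately after arrival. The tagged customer departs when the queue, started from level $n+1$, first drops by the tagged customer's position in line; since FCFS is used, $W(n;\underline{p})$ is the expected time for the birth-and-death process with birth rates $\lambda_k = \lambda p_k$ and death rates $\mu_k$ to complete $n+1$ service completions "seen by" the tagged customer. The cleanest way to get both bounds is a coupling / pathwise comparison: compare the actual process to two auxiliary single-server queues, one that always serves at the slowest rate $\mu_1$ and one that always serves at the fastest rate $M$. For the lower bound, note that the tagged customer must wait through at least $n+1$ service completions (her own plus the $n$ ahead), and each service completion takes stochastically at least an $\mathrm{Exp}(M)$ amount of time because $\mu_k \le M$ for all $k$; summing $n+1$ such independent lower bounds gives $W(n;\underline{p}) \ge (n+1)/M$. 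For the upper bound, observe that the service rate is always at least $\mu_1$ whenever the system is nonempty (by the nondecreasing assumption $\mu_k \ge \mu_1$ for $k \ge 1$), and while the tagged customer is present the system is certainly nonempty, so her total sojourn time is stochastically dominated by the sum of $n+1$ independent $\mathrm{Exp}(\mu_1)$ service times — arrivals of new customers behind her are irrelevant to her own departure under FCFS. This yields $W(n;\underline{p}) \le (n+1)/\mu_1$. I would make the stochastic-dominance steps precise by an explicit coupling of the departure epochs, driving all three processes with the same sequence of exponential clocks and using $\mu_1 \le \mu_k \le M$ to order the clock rates.

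Part 1(ii) is then immediate: since $\tilde R$ is a fixed constant and $W(n;\underline{p}) \ge (n+1)/M \to \infty$, there is a finite $n$ with $\tilde R - W(n;\underline{p}) < 0$, so $n_b(\underline{p}) < \infty$.

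For part 2, let $\underline{p}^e$ be an equilibrium strategy. By 1(ii), $n_b(\underline{p}^e) < \infty$. Suppose for contradiction that $n_0(\underline{p}^e) > n_b(\underline{p}^e)$, i.e. $p^e_n > 0$ for every $n \le n_b(\underline{p}^e)$. At $n = n_b(\underline{p}^e)$ we have $\tilde R - W(n_b(\underline{p}^e);\underline{p}^e) < 0$ by definition of $n_b$, while $p^e_{n_b(\underline{p}^e)} > 0$; but the equilibrium conditions \eqref{suf_nec_equil1}–\eqref{suf_nec_equilrand} require that whenever $p^e_n > 0$ one has $\tilde R - W(n;\underline{p}^e) \ge 0$ (strictly positive forces $p^e_n = 1$, zero is compatible with any value in $(0,1]$). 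This contradiction shows $n_0(\underline{p}^e) \le n_b(\underline{p}^e) < \infty$.

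The main obstacle is making the two stochastic comparisons in 1(i) rigorous, i.e. setting up a single coupling on a common probability space under which the tagged customer's departure epoch in the true system lies between the corresponding epochs in the constant-rate-$M$ and constant-rate-$\mu_1$ systems; the bookkeeping of which service completions the tagged customer actually "waits out" under FCFS (and the fact that births behind her do not matter) needs a little care, but it is standard once the right coupling is chosen.
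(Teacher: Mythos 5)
Your proposal is correct and follows essentially the same route as the paper: the bounds in 1(i) come from comparing against constant-rate-$M$ and constant-rate-$\mu_1$ service (which the paper states as immediate and you flesh out with a coupling), 1(ii) follows from the divergence of the lower bound, and part 2 is the paper's direct application of the equilibrium conditions at $n_b$, merely recast as a contradiction.
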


\begin{proof}
\begin{itemize}
\item[1.]  Statement (i) is immediate from the definition of $W(n;\underline{p})$, since the bounds correspond to the expected waiting time of a tagged customer who has $n$ customers ahead, is last in queue, and the service rate is kept constant at its highest or lowest possible value, 

Now for any $n\geq\tilde{R}\;M$, it follows that $\tilde{R}-W(n;\underline{p})\leq -\frac{1}{M}<0$. 
Therefore, $n_b(\underline{p})\leq \lfloor\tilde{R}\;M\rfloor+1<\infty$.

\item[2.]
From 1(ii) and the equilibrium conditions \eqref{suf_nec_equil0}-\eqref{suf_nec_equilrand}, it follows that any $\underline{p}^e\in\Omega$ satisfies $p^e_n=0$ for all $n\geq n_b(\underline{p}^e)$. Therefore $n_0(\underline{p^e})\leq n_{b}(\underline{p^e})$.
\end{itemize}
\end{proof}
From Lemma \ref{first res}, it follows that $\Omega\subseteq\Pi_{M}$, where $\Pi_{M}=\{\underline{p}\in\Pi:\;\;n_0(\underline{p})<\infty\}$. Therefore, we  can restrict the analysis to strategies where balking first occurs at a finite state, i.e., the possibility of never balking is excluded.

Furthermore, we observe that under any strategy $\underline{p}\in\Pi_M$, the  corresponding Markov Chain of the number of customers in the system has a single ergodic class  $\{0,1,\ldots,n_0(\underline{p})\}$, while states $\{n_0(\underline{p})+1,\ldots\}$ are transient.

In general, for $\underline{p}^e\in\Omega$, the equilibrium conditions must be satisfied for all states, both ergodic and transient. However, under any strategy  $\underline{p}^e\in\Pi_M$, with probability $1$ only a finite number of customers will encounter a transient state upon arrival and their decisions do not affect the system performance and the expected customer net benefit in steady state. 

Therefore, in the steady state framework we may relax the conditions for equilibrium and demand that \eqref{suf_nec_equil0}-\eqref{suf_nec_equilrand} are satisfied by a strategy $\underline{p}\in\Pi_M$ only for states in the ergodic class. 

In this sense, we define the corresponding strategy set as 
\begin{equation*}\Omega_{RC}=\{\underline{p}\in\Pi_{M}:\;\hbox{equilibrium conditions in \eqref{suf_nec_equil0},\eqref{suf_nec_equil1} and \eqref{suf_nec_equilrand} hold},\;\hbox{ for any } n\leq n_0(\underline{p})\}.
\end{equation*}
and refer to strategies $\underline{p}\in\Omega_{RC}$ as recurrent class equilibria. It is immediate that $\Omega\subseteq\Omega_{RC}\subseteq\Pi_M$. 

Note that the strategies in $\Omega$, which in addition to the ergodic states satisfy the equilibrium conditions also for transient states, are defined as subgame perfect equilibria (SPE) in \cite{hassinhaviv} and  \cite{hassin-book2016}.

For recurrent-class equilibria $\underline{p}\in\Omega_{RC}$, the necessary and sufficient conditions are:
\begin{equation}
\underline{p}\in\Omega_{RC}\;\hbox{if and only
if}\;\left\{\begin{array}[pos]{l}\tilde{R}-W(n_0(\underline{p});\underline{p})\leq0,\\ \tilde{R}-
W(n;\underline{p})\geq0,\; \forall n\leq
n_0(\underline{p})-1,\label{condequilRC}\\ \tilde{R}-W(n;\underline{p})=0,\;\forall n\leq
n_0(\underline{p})-1:p_n\in(0,1)\end{array}\right..
\end{equation}

Thus, the conditions for $p\in\Omega_{RC}$ require calculation of the waiting time $W(n;\underline{p})$ only for states in the recurrent class.

We now proceed to the computation of the delay function $W(n;\underline{p}),\;n=0,1,\ldots,n_0(\underline{p}),$ for a strategy $\underline{p}\in\Pi_M$. In this case, the system reduces in an $M/M/1$ queue with finite buffer size, $n_0=n_0(\underline{p})$, arrival rates $\lambda_n=\lambda p_n$ and service rates $\mu_n$ for $n=1,\ldots,n_0(\underline{p})$.

In order to compute $W(n;\underline{p})$, we generalize the waiting time definition and consider the function $W(n,m;\underline{p})$, which denotes the expected waiting time of a tagged customer already in the system, given that there are $n$ customers in front of him, $m$ customers in total in the system and all future arrivals follow mixed strategy $\underline{p}$. The dependence on $m$ is needed because the service rate is state dependent and may switch several times after a customer's entrance depending on the number of future entrances. Using this definition, the waiting time $W(n;\underline{p})$ of a joining customer may be expressed, as follows:
\begin{equation}\label{wtf}
W(n;\underline{p})=\left\{\begin{array}[pos]{ll}
W(n,n+1;\underline{p}),&n=0,1,\ldots,n_0-1\\
\frac{1}{\mu_{n_0+1}}+W(n_0-1,n_0);\underline{p}),&n=n_0
\end{array}\right..
\end{equation}

For the second branch in \eqref{wtf}, we note that, although strategy $\underline{p}$ prescribes balking for $n=n_0$, in order to characterize the  equilibrium, we need an expression for $W(n_0;\underline{p})$, i.e. the waiting time of a tagged customer who finds $n_0$ customers in the system upon arrival, and nevertheless joins. In this case, all future arrivals will balk until the next departure, and thus, the customer in service has a residual service time exponentially distributed with $\mu_{n_0+1}$.

From first-step analysis, we can derive equations for the generalized waiting time $W(n,m;\underline{p})$, as follows.

Assume that all customers follow mixed strategy $\underline{p}\in\Pi_M$ and let $n_0=n_0(\underline{p}$. Consider a tagged customer and define a new Markov Chain that describes the tagged customer's position until his departure. The state of this process is the pair $(n,m)$ as defined above. The transition diagram is presented in Figure \ref{statediagram}. Note that, the transition rates are noted above each arrow, and {\it{(D)}} refers to departure.

\begin{figure}[H]
\centering
$$ \def\labelstyle{\scriptstyle}
 \xymatrix @C+0.5pc  {
&&&&&&&n_0-1,n_0\ar[dl]_{\mu_{n_0}}\\
&&&&&&n_0-2,n_0-1\ar[dl]_{\mu_{n_0-1}}\ar[r]^{\lambda p_{n_0-1}}&n_0-2,n_0\ar[dl]_{\mu_{n_0}}\\
&&&&&&\vdots&\vdots\\
&&&&m,m+1\ar[dl]_{\mu_{m+1}}\ar[r]^{\lambda p_{m+1}}&\cdots\ar[r]^{\lambda p_{n_0-2}}&m,n_0-1\ar[dl]_{\mu_{n_0-1}}\ar[r]^{\lambda p_{n_0-1}}&m,n_0\ar[dl]_{\mu_{n_0}}\\
&&&m-1,m\ar[dl]_{\mu_m}\ar[r]^{\lambda p_{m}}&m-1,m+1\ar[dl]_{\mu_{m+1}}\ar[r]^{\lambda
p_{m+1}}&\cdots\ar[r]^{\lambda p_{n_0-2}}&m-1,n_0-1\ar[dl]_{\mu_{n_0-1}}\ar[r]^{\lambda
p_{n_0-1}}&m-1,n_0\ar[dl]_{\mu_{n_0}}\\
&&&\vdots&\vdots&&\vdots&\vdots\\
&1,2\ar[dl]_{\mu_2}\ar[r]^{\lambda p_2}&\cdots\ar[r]^{\lambda p_{m-1}}&1,m\ar[dl]_{\mu_m}\ar[r]^{\lambda
p_{m}}&1,m+1\ar[dl]_{\mu_{m+1}}\ar[r]^{\lambda
p_{m+1}}&\cdots\ar[r]^{\lambda p_{n_0-2}}&1,n_0-1\ar[dl]_{\mu_{n_0-1}}\ar[r]^{\lambda
p_{n_0-1}}&1,n_0\ar[dl]_{\mu_{n_0}}\\
0,1\ar[drrr]_{\mu_1}\ar[r]^{\lambda
p_1}&0,2\ar[drr]_{\mu_2}\ar[r]^{\lambda p_2}&\cdots\ar[r]^{\lambda p_{m-1}}&0,m\ar[d]_{\mu_m}\ar[r]^{\lambda p_m}&0,m+1\ar[dl]_{\mu_{m+1}}\ar[r]^{\lambda p_{m+1}}&\cdots\ar[r]^{\lambda p_{n_0-2}}&0,n_0-1\ar[dlll]_{\mu_{n_0-1}}\ar[r]^{\lambda
p_{n_0-1}}&0,n_0\ar[dllll]_{\mu_{n_0}}\\
&&&(D)&&&&}
$$
\caption{State Transition Diagram}\label{statediagram}
\end{figure}
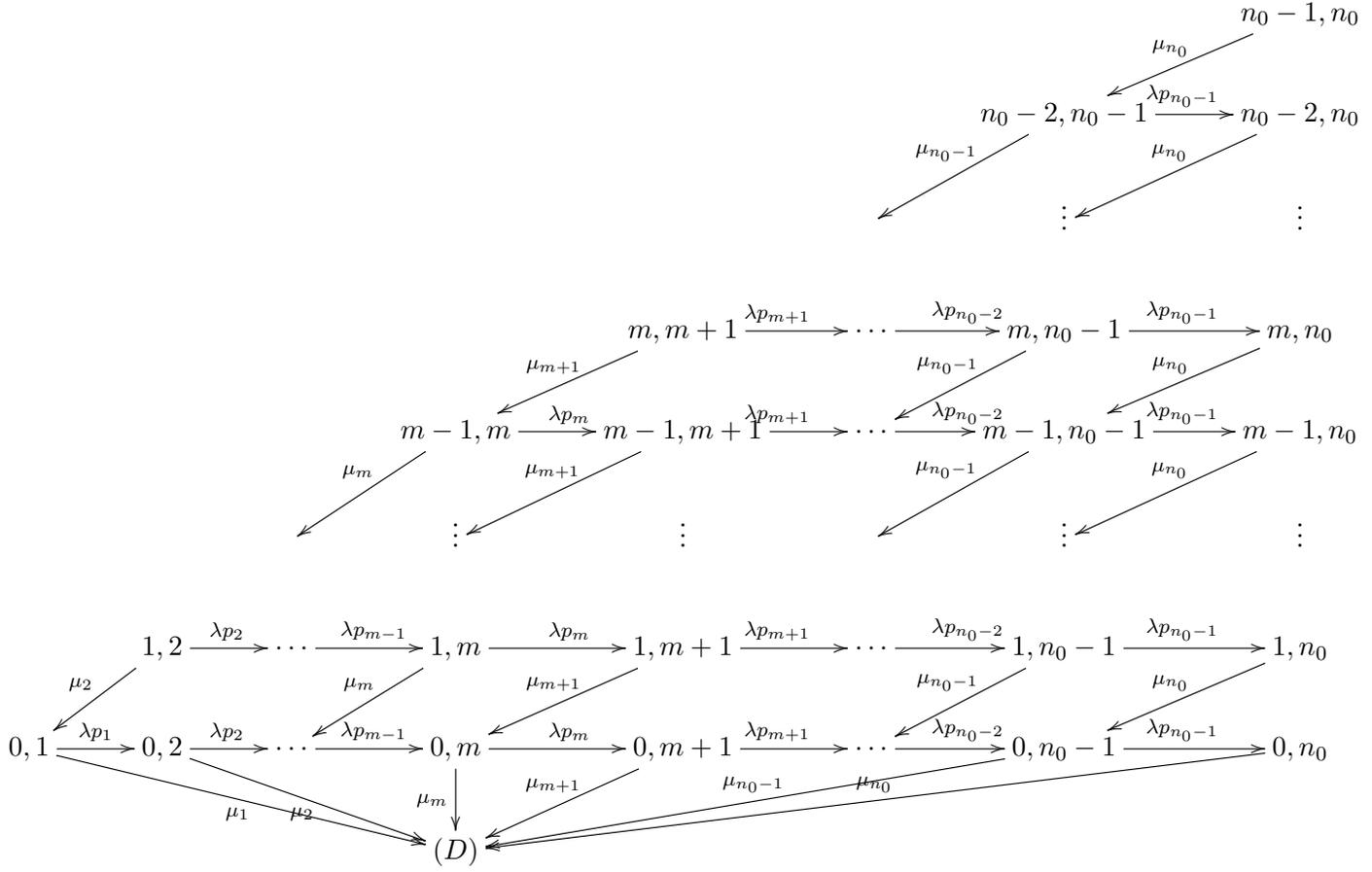

From the state transition diagram and first-step analysis, we can derive $W(n,m;\underline{p})$ as the solution of the following system of linear equations. We omit the dependence on the strategy $\underline{p}$.

For $n=0$:
\begin{equation}
\label{w0m}W(0,m)=\frac{1}{\lambda p_m+\mu_m}+\frac{\lambda
p_m}{\lambda p_m+\mu_m}W(0,m+1),\hbox{ for } 1\leq m\leq n_0.
\end{equation}

Similarly, for $1\leq n\leq n_0-1$:
\begin{eqnarray}
\label{wnm}W(n,m)=\frac{1}{\lambda p_m+\mu_m}+\frac{\lambda
p_m}{\lambda p_m+\mu_m}W(n,m+1)+\frac{\mu_m}{\lambda p_m+\mu_m}W(n-1,m-1),\\\nonumber\hbox{ for }n+1\leq m\leq n_0.
\end{eqnarray}
Note that, for $m=n_0$ the above equations can be simplified to 
\begin{equation*}
W(0,n_0)=\frac{1}{\mu_{n_0}}, \hbox{ and } W(n,n_0)=\frac{1}{\mu_{n_0}}+W(n-1,n_0-1),
\end{equation*}
since $p_{n_0}=0$. 

The waiting time function $W(n;\underline{p})$ follows from the solution of  \eqref{w0m}, \eqref{wnm}, as well as \eqref{wtf}.


\section{Pure threshold strategies}\label{sec-pure}
In this section, we restrict attention to pure threshold strategies, i.e. strategies $\underline{p}\in\Pi_M$, such that $p_n=1$, for any $n\leq n_0(\underline{p})-1$. Under a pure threshold strategy, an arriving customer enters the system if and only if he/she finds at most $n_0-1$ customers already present in the system upon arrival.

Although considering pure threshold strategies is a restriction of the equilibrium class as discussed in the previous section, such equilibria, when they exist, have some appealing properties. They are easy to describe, since only $n_0$ is required, and, more importantly, the equilibrium analysis is considerably simplified due to a monotonicity property proven below. We have performed an extensive numerical search in a large range of parameter values which did not locate any equilibrium strategies that violated the threshold structure. 

Since a pure threshold strategy is uniquely determined by the threshold value $n_0$, the class of pure threshold strategies is equivalent to the set of positive integers. Specifically, let $\tilde{\underline{p}}(n_0)$ be the pure threshold strategy which is determined by $n_0\in N$, thus $\tilde{p}_n=1$ for any $n\leq n_0-1$ and $\Pi_{TH}=\{\tilde{\underline{p}}(n_0):\; n_0=1,2,\ldots\}$ be the set of pure threshold strategies. Then $\Omega_{TH}=\Omega_{RC}\cap\Pi_{TH}$, is defined as the set of pure threshold strategies that are symmetric equilibria in the recurrent class. 

For $\underline{p}\in\Omega_{TH}$ the conditions for equilibrium given in \eqref{condequilRC} simplify to the following:
\begin{equation}
\label{condequilpthr_V1}\tilde{\underline{p}}(n_0)\in\Omega_{TH}\;\hbox{if and only
if}\;\left\{\begin{array}[pos]{l} \tilde{R}-
W(n;\underline{p})\geq0,\; n=0,1,\ldots,
n_0-1\\ \tilde{R}-W(n_0;\underline{p})\leq0\end{array}\right.,
\end{equation}
since $ \tilde{p}_n=1$ for all $n\leq n_0-1$.

From Lemma \ref{first res}(i) and the equilibrium conditions given in \eqref{condequilpthr_V1}, we see that in order for $\tilde{\underline{p}}(n_0)\in\Omega_{TH}$, the threshold $n_0$ must lie in the interval,
\begin{equation}
\label{neccondequilpthr}
\left(\tilde{R}-\frac{1}{M}\right)\mu_1\leq n_0\leq \tilde{R}\;M.
\end{equation}

It follows that the number of pure threshold equilibrium strategies is finite. In particular, there exists a finite algorithm which can identify all the pure threshold equilibrium strategies by checking the equilibrium conditions for each positive integer $n_0$ lying in the interval given by \eqref{neccondequilpthr}. For any given $n_0$, this algorithm first solves the finite linear system of equations in \eqref{w0m}, \eqref{wnm} and then verifies the inequalities in \eqref{condequilpthr_V1}. Depending on the values of the parameters, the range in \eqref{neccondequilpthr} can be significantly large, however the number of the pure threshold strategies that must be checked for equilibria is finite. 

Finally, in Proposition \ref{st_induction} below, we prove that for any pure threshold joining strategy $\underline{\tilde{p}}(n_0)$ the waiting time $W(n;\underline{\tilde{p}}(n_0))$ is non-decreasing in $n$, since the generalized waiting time $W(n,n+1;\tilde{\underline{p}}(n_0))$ is non-decreasing in $n$. This leads to a significant simplification  of the equilibrium conditions. 

In view of \eqref{wtf}, to show this monotonic behavior, it is sufficient to show that for any pure threshold strategy $\underline{\tilde{p}}(n_0)$:
\begin{equation*}
W(n-1,n)\leq W(n,n+1),\hbox{ for }n\leq n_0-1.
\end{equation*}  
%
%
We prove this by a coupling argument. 

Specifically, we consider two $M/M/1$ queueing systems, denoted with $A$ and $B$. System $A$ has $n$ customers in total labeled from $1$ to $n$ where customer $1$ is in service, whereas system $B$ has $n+1$ customers in total labeled from $0$ to $n$ and serves customer $0$, respectively. Without loss of generality, we assume that customer $0$ just entered service in system $B$. From this time instance, denoted by $t=0$, and onward, we couple both systems on the arrival times of future customers $n+1,n+2,\dots$, as well as on the service process, as follows. Let $S_j$ denote the service requirements of customer $j$ for $j=1,2,\dots,n$. We assume that $S_j$ are i.i.d. random variables exponentially distributed with mean $1$. The service requirements of customer $1,2,\ldots,n$ are coupled in the two systems. In this framework, the service rate $\mu(t)$ corresponds to the instantaneous rate of decrease of $S$. Note that, if $\mu(t)$ is constant and equal to $\mu$ then the service time of any customer $j$ follows an exponential distribution with parameter $\mu$.  

For this coupling scheme, we show in Proposition \ref{st_induction} that $T_A(n)\leq T_B(n)$ with probability $1$, where $T_A(n), T_B(n)$ denote the sojourn time of customer $n$ in systems $A$ and $B$, respectively. By taking expectations, we then derive that $W(n-1,n)\leq W(n,n+1)$ for any $n\leq n_0-1$.  

\begin{proposition}\label{st_induction}
For any pure threshold strategy with threshold $n_0$ and any service policy with a non-decreasing service rate in the number of customers in the system, the following holds with probability $1$:
\begin{equation}\label{st_ineq} T_A(j)\leq T_B(j),\hbox{ for any }j=1,2,\ldots,n 
\end{equation}
\end{proposition}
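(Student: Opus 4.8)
The plan is to establish the stronger pathwise inequality directly on the probability space carrying the coupling. For $X\in\{A,B\}$ let $m_X(t)$ be the number of customers present at time $t$ and set $\Phi_X(t)=\int_0^t\mu_{m_X(s)}\,ds$, the total service effort expended up to time $t$; under the coupling the residual requirement of the customer in service decreases at instantaneous rate $\mu_{m_X(t)}$, so a customer of $X$ departs at the instant $\Phi_X$ has processed the sum of the requirements of all customers that were ahead of and including him. Since both systems are FCFS and customers $1,\dots,n$ (and, in $B$, also customer $0$) are present at $t=0$, the first $n$ departures of $A$ are customers $1,\dots,n$ and the first $n+1$ departures of $B$ are customers $0,1,\dots,n$, in order. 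Writing $\Sigma_j=S_1+\cdots+S_j$ (with $\Sigma_0=0$), this gives, for $1\le j\le n$,
\[
T_A(j)=\inf\{t:\Phi_A(t)\ge\Sigma_j\},\qquad T_B(j)=\inf\{t:\Phi_B(t)\ge S_0+\Sigma_j\}.
\]
Consequently $T_A(j)\le T_B(j)$ for all $j\le n$ follows once we prove the single inequality $g(t):=\Phi_B(t)-\Phi_A(t)\le S_0$ for all $t\ge0$: indeed $\Phi_B(T_A(j))\le\Phi_A(T_A(j))+S_0=\Sigma_j+S_0$, so $\Phi_B$ has not reached $S_0+\Sigma_j$ by time $T_A(j)$, whence $T_B(j)\ge T_A(j)$.

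I would prove $g\le S_0$ by contradiction. As $g$ is continuous with $g(0)=0$, if the bound fails there is a first time $t^*$ at which $g(t^*)=S_0$ and $g$ increases immediately thereafter; since $g$ is piecewise linear with right-hand slope $\mu_{m_B(t^*)}-\mu_{m_A(t^*)}$, this forces $\mu_{m_B(t^*)}>\mu_{m_A(t^*)}$, hence $m_B(t^*)>m_A(t^*)$ because $(\mu_n)$ is non-decreasing. The remaining task is to contradict $m_B(t^*)>m_A(t^*)$. Let $\mathrm{arr}_X(t)$ and $D_X(t)$ count accepted arrivals and departures in $X$, so that $m_X(t)=m_X(0)+\mathrm{arr}_X(t)-D_X(t)$. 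Comparing the first-passage thresholds $S_0+\Sigma_j$ in $B$ against $\Sigma_j$ in $A$, the bound $g\le S_0$ on $[0,t^*]$ yields $D_B(t)\le D_A(t)+1$ there, and the equality $g(t^*)=S_0$ forces $D_B(t^*)=D_A(t^*)+1$ exactly. Substituting into the identity above gives $m_A(t)-m_B(t)\le\mathrm{arr}_A(t)-\mathrm{arr}_B(t)$ for $t\in[0,t^*]$.

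Now the equal-threshold structure enters: an arrival is admitted by $B$ but refused by $A$ only when $m_B<n_0\le m_A$, i.e.\ only when $m_A-m_B\ge1$, which by the last inequality requires $\mathrm{arr}_A-\mathrm{arr}_B\ge1$ at that instant. A short induction over the (a.s.\ isolated) arrival epochs then shows $\mathrm{arr}_A(t)-\mathrm{arr}_B(t)\ge0$ throughout $[0,t^*]$: the quantity starts at $0$, is only increased at arrivals admitted solely by $A$, and is only decreased at arrivals admitted solely by $B$, which happen only when it is already at least $1$. Hence $\mathrm{arr}_B(t^*)\le\mathrm{arr}_A(t^*)$, and using $D_B(t^*)-D_A(t^*)=1$ we get $m_B(t^*)-m_A(t^*)=1+(\mathrm{arr}_B-\mathrm{arr}_A)(t^*)-1\le0$, contradicting $m_B(t^*)>m_A(t^*)$. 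Therefore $g\le S_0$ everywhere, which proves the proposition; taking expectations in $T_A(n)\le T_B(n)$ gives $W(n-1,n)\le W(n,n+1)$, and with \eqref{wtf} the monotonicity of $W(n;\tilde{\underline{p}}(n_0))$ in $n$.

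The crux, and the step I expect to need the most care, is precisely the control of arrival admissions: because the service rate is state dependent, the more heavily loaded system $B$ can momentarily be the faster one, so one cannot simply claim $m_A(t)\le m_B(t)$ for all $t$; instead the coupled departure-count bound $D_B-D_A\le1$ (equivalently $g\le S_0$) must be propagated and used to keep the accepted-arrival counts ordered. Secondary points requiring attention are right-continuity of $m_A,m_B$ at $t^*$ together with the a.s.\ absence of simultaneous events, and restricting the comparison to $t\le T_B(n)$ so that the identification of the first $n+1$ departures of $B$ with customers $0,\dots,n$ remains valid; none of this alters the structure of the argument.
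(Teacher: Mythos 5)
Your proof is correct, but it is organized genuinely differently from the paper's. The paper proceeds by induction on the departure index $j$: at each epoch $T_B(j)$ it establishes $N_A(T_B(j))\geq N_B(T_B(j))$ via a contradiction at the last instant a customer joined $B$ but not $A$, infers $\mu_A(t)\geq\mu_B(t)$ up to the next departure, and concludes $T_A(j+1)\leq T_B(j+1)$. You instead introduce the cumulative service effort $\Phi_X(t)=\int_0^t\mu_{m_X(s)}\,ds$, identify the departure times of the initial customers as first-passage times of $\Phi_X$ over the partial sums of the coupled requirements, and thereby reduce all $n$ inequalities simultaneously to the single scalar bound $\Phi_B-\Phi_A\leq S_0$, proved by a first-crossing contradiction. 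The two arguments ultimately rest on the same two combinatorial facts --- the departure-count bound $D_B\leq D_A+1$ and the observation that an arrival can be admitted by $B$ and refused by $A$ only when $m_A\geq n_0>m_B$, which keeps the admitted-arrival counts ordered --- but you obtain the departure bound from the work-conservation identity rather than from an induction hypothesis, and you replace the paper's bookkeeping over the interval $(t_0,T_B(j)]$ after the ``last disagreement instant'' with a cleaner monotone-walk argument on $\mathrm{arr}_A-\mathrm{arr}_B$. What your route buys is that the induction on $j$ disappears and the mechanism (the work backlog of $B$ relative to $A$ never exceeds the initial head start $S_0$) becomes explicit; the cost is the extra care you correctly flag, namely that the identification of departures with first-passage times of $\Phi$ is only valid up to $T_A(n)\wedge T_B(n)$, so the first-violation time must be checked to lie in that range (it does, as $g\leq S_0$ up to $t^*$ forces $t^*<T_B(n)$ whenever $t^*<T_A(n)$). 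Your concluding step, taking expectations to get $W(n-1,n)\leq W(n,n+1)$, matches the paper.
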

 
\begin{proof}
We consider the coupling scheme described before and  assume that the adopted service policy in both systems prescribes a non-decreasing service rate with respect to the number of customers in the system. 

In the proof of the Proposition we will only consider sample paths in which no two events in the same system may occur simultaneously. Because all associated random variables are continuous, the excluded sample paths have probability zero.

We prove \eqref{st_ineq} by induction in $j$. For $j=1$, we must prove that $T_A(1)\leq T_B(1)$. We consider the following cases:
\begin{itemize}
\item[A1.] If $T_A(1)\leq T_B(0)$, then the result follows readily.

\item[A2.] Assume that $T_A(1)> T_B(0)$. Then at time instant $t=T_B(0)$, customer $1$ is still in service in system $A$, whereas in $B$ he starts his service. Therefore,
\begin{eqnarray}
N_A(T_B(0))&=&n+J_A(T_B(0)),\\
N_B(T_B(0))&=&n+J_B(T_B(0)),
\end{eqnarray}
where $N_M(t)$ and $J_M(t)$ denote the number of customers present in system $M$ at time $t$, and the number of customers who joined in $M$ in $[0,t]$, for $M=A,\;B$. We assume that $N_M(t)$ have right-continuous sample paths, e.g. if $t_D$ is a departure time from system $M$, then $N_M(t_D)$ denotes the number of customers in $M$ immediately after this departure. This assumption implies that $J_M(t)$ also has right-continuous sample paths. 

In order to prove that $T_A(1)\leq T_B(1)$, we first show that $N_A(T_B(0))\geq N_B(T_B(0))$. This is next shown to imply that the server in $A$ will work faster than in $B$ until the next departure.

We show the inequality by contradiction. Assume that
\begin{equation*}
N_A(T_B(0))< N_B(T_B(0)),\hbox{ or equivalently, }J_A(T_B(0))< J_B(T_B(0)).
\end{equation*} 

This means that at some earlier time $t_0<T_B(0)$, $A$ had reached the threshold $n_0$ and $B$ could still accept customers, i.e. $N_A(t_0)=n_0, N_B(t_0)<n_0$, so that a subsequent arrival joined $B$ but not $A$. 

However, the number of joining customers in $A$ in $[t_0,T_B(0))$ must be equal to $0$ since there is no departure from $A$ in $[t_0,T_B(0))$ , and, thus all future arrivals in $A$ until $T_B(0)$ are lost. Therefore, $N_A(T_B(0))=n_0\geq N_B(T_B(0))$, which is a contradiction. Therefore, $N_A(T_B(0))\geq N_B(T_B(0))$. 

Since $N_A(T_B(0))\geq N_B(T_B(0))$ and the service speed is non-decreasing in the number of customers, it follows that $\mu_A(T_B(0))\geq\mu_B(T_B(0))$ where $\mu_M$ refers to the service rate employed in system $M$ at time instant $t$. In addition, since future arrivals are coupled, the relationship between $N_A(t)$ and $N_B(t)$ at any time $t$  will not change until the next departure. Therefore $\mu_A(t)\geq\mu_B(t)$ for any $t$ until the next departure. Since the service requirements of customer $1$ are coupled in the two systems, and at $t=T_B(0)$ customer$-1$ in $A$ has already finished some part of his requirements, whereas the same customer in $B$ starts his service, it follows that the next departure will be from $A$, i.e. $T_A(1)<T_B(1)$. Therefore, \eqref{st_ineq} is true for $j=1$.
\end{itemize}

We next assume that \eqref{st_ineq} holds for any $i=1,\ldots,j$, and prove that it is true for $j+1$. 

From the induction hypothesis, it follows that $D_A(t)\geq D_B(t)-1$ for any $t\in[0,T_B(j)]$, where $D_A(t), D_B(t)$ refer to the number of departures from systems $A$ and $B$, respectively, at time instant $t$. Indeed, when any customer $i\leq j$ departs from system $B$, i.e., $D_B(T_B(i))=i+1$, this customer has already departed from system $A$, i.e., $D_A(T_B(i))\geq i$.

We consider the following cases.
\begin{itemize} 
\item[B1.] If $T_A(j+1)\leq T_B(j)$, then $T_A(j+1)\leq T_B(j)<T_B(j+1)$ and the result follows readily, since the queue discipline is FCFS.

\item[B2.] Assume that $T_A(j+1)> T_B(j)$, i.e. $T_A(j)\leq T_B(j)<T_A(j+1)$, thus at time instant $t=T_B(j)$, customer $(j+1)$ is still in service in system $A$, whereas the same customer in $B$ starts service at this instant. Therefore,
\begin{eqnarray}
\label{NA1}N_A(T_B(j))=n-j+J_A(T_B(j)),&\\
\label{NB1}N_B(T_B(j))=n+1-(j+1)+J_B(T_B(j))=&n-j+J_B(T_B(j)).
\end{eqnarray}

Similarly to the initial step of the induction for $j=1$, we will first show that \begin{equation*}
N_A(T_B(j))\geq N_B(T_B(j)).
\end{equation*} 

Assume that $N_A(T_B(j))< N_B(T_B(j))$, or equivalently $J_A(T_B(j))< J_B(T_B(j))$. Then in at least one instant before $T_B(j)$, system $A$ was full, $B$ had empty space and a customer joined $B$. Let $t_0<T_B(j)$ be the last time before $T_B(j)$ that this event occurred. Thus,  $N_A(t_0)=n_0$ and $N_B(t_0)=n_0-k$, for some $k\geq0$.  

Considering the time interval $(t_0,T_B(j)]$ we obtain the following:
\begin{eqnarray}
\label{NA2}N_A(T_B(j))=&N_A(t_0)+\tilde{J}_A((t_0,T_B(j)])-\tilde{D}_A((t_0,T_B(j)])=&n_0+\tilde{J}_A-\tilde{D}_A,\\
\label{NB2}N_B(T_B(j))=&N_B(t_0)+\tilde{J}_B((t_0,T_B(j)])-\tilde{D}_B((t_0,T_B(j)])=&n_0-k+\tilde{J}_B-\tilde{D}_B,
\end{eqnarray}
where $\tilde{J}_{M}:=\tilde{J}_{M}((t_0,T_B(j)])$ is the number of joining customers and $\tilde{D}_{M}:=\tilde{D}_{M}((t_0,T_B(j)])$ is the corresponding number of departures from system $M=A$ or $B$, respectively, in the interval $(t_0, T_B(j)]$. 

Solving \eqref{NA2} with respect to $\tilde{D}_A$, it follows that
\begin{equation}
\label{DAineq}\tilde{D}_A=n_0-N_A(T_B(j))+\tilde{J}_A>n_0-N_B(T_B(j))+\tilde{J}_A,
\end{equation}  
since $N_A(T_B(j))<N_B(T_B(j))$.

Thus, from \eqref{NB2},
\begin{equation}
\label{DAineq2}\tilde{D}_A>n_0-(n_0-k)-\tilde{J}_B+\tilde{D}_B+\tilde{J}_A=k+\tilde{D}_B+\tilde{J}_A-\tilde{J}_B.
\end{equation}
However,
\begin{eqnarray}
\label{DA1}\tilde{D}_A=&D_A(T_B(j))-D_A(t_0)=&j-D_A(t_0)\\
\label{DA2}\tilde{D}_B=&D_B(T_B(j))-D_B(t_0)=&j+1-D_B(t_0),
\end{eqnarray}

Substituting \eqref{DA1},\eqref{DA2} into \eqref{DAineq2}, we obtain  
\begin{eqnarray}
\nonumber j-D_A(t_0)>j+1-D_B(t_0)+k+\tilde{J}_A-\tilde{J}_B&\Leftrightarrow &\\\label{DAineq3} D_A(t_0)<D_B(t_0)-k-1+\tilde{J}_B-\tilde{J}_A&&.
\end{eqnarray}

From the induction hypothesis it follows that 
\begin{equation*}
D_B(t_0)-1\leq D_A(t_0)<D_B(t_0)-k-1+\tilde{J}_B-\tilde{J}_A, 
\end{equation*}
therefore
\begin{equation}
\label{Jineq} \tilde{J}_B>\tilde{J}_A+k,\hbox{ with }k\geq0.
\end{equation}

However, in the interval $(t_0,T_B(j)]$ there was never an instant where a customer joined system $B$ and not $A$, because $t_0$ is the last instant before $T_B(j)$ that this event happened. Therefore $\tilde{J}_B\leq\tilde{J}_A$ and this is a contradiction. Thus, $N_A(T_B(j))\geq N_B(T_B(j))$. 

Since $N_A(T_B(j))\geq N_B(T_B(j))$, it follows that $\mu_A(T_B(j))\geq\mu_B(T_B(j)).$ Following a completely similar argument as in the case for $j=0$, we obtain that \begin{equation}
\mu_A(t)\geq\mu_B(t),\;t\in[T_B(j),T_A(j+1)],
\end{equation}
and, since customer $j+1$ has already started service in $A$ at $t=T_B(j)$, he will depart sooner than customer $j+1$ in $B$.
\end{itemize}

Therefore, \eqref{st_ineq} is also true for $j+1$, and the proof is complete.
\end{proof} 

The monotonicity of $W(n;\underline{\tilde{p}}(n_0))$, allows us to further simplify the equilibrium conditions given in \eqref{condequilpthr_V1} as follows. Since $W(n;\tilde{\underline{p}}(n_0))$ is non-decreasing in $n$ for any $n\leq n_0-1$, 
\begin{equation*}
W(n;\underline{\tilde{p}}(n_0))\leq\tilde{R}\hbox{ for all }n\leq n_0-1 \hbox{ if and only if } W(n_0-1;\underline{\tilde{p}}(n_0))\leq\tilde{R},
\end{equation*}
and, thus,
\begin{equation}
\label{condequilpthrsvr_V2}\tilde{\underline{p}}(n_0)\in\Omega_{TH}\;\hbox{if and only
if}\;\left\{\begin{array}[pos]{l}\tilde{R}-W(n_0;\tilde{\underline{p}}(n_0))\leq0\\ \tilde{R}-
W(n_0-1;\tilde{\underline{p}}(n_0))\geq0\end{array}\right..
\end{equation}
The latter corresponds to a simple condition that characterizes pure threshold strategies. This condition is stated in the following theorem.

\begin{theorem}\label{condequilpthrsvr}
Given a pure threshold strategy $\underline{\tilde{p}}(n_0)$,
\begin{eqnarray}
\nonumber \tilde{\underline{p}}(n_0)\in\Omega_{TH}&\hbox{if and only
if}& \left(\tilde{R}-\frac{1}{M}\right)\mu_1\leq n_0\leq \tilde{R}M,\hbox{ and,}\\ &&\tilde{R}-\frac{1}{\mu_{n_0+1}}\leq W\left(n_0-1,n_0;\tilde{\underline{p}}(n_0)\right)\leq \tilde{R}.\label{condequilpthrsvr_V3}
\end{eqnarray}
\end{theorem}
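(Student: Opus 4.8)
The plan is to obtain \eqref{condequilpthrsvr_V3} as a direct, reversible rewriting of the already-simplified criterion \eqref{condequilpthrsvr_V2}, which is available because the monotonicity of the delay function established via the coupling in Proposition~\ref{st_induction} has collapsed the whole family of equilibrium inequalities \eqref{condequilpthr_V1} into the two boundary conditions $\tilde{R}-W(n_0;\tilde{\underline{p}}(n_0))\le 0$ and $\tilde{R}-W(n_0-1;\tilde{\underline{p}}(n_0))\ge 0$. Since \eqref{condequilpthrsvr_V2} is itself an equivalence, it is enough to (a) express $W(n_0;\tilde{\underline{p}}(n_0))$ and $W(n_0-1;\tilde{\underline{p}}(n_0))$ through the single generalized waiting time $W(n_0-1,n_0;\tilde{\underline{p}}(n_0))$, and (b) append the admissible range for the integer $n_0$; both directions of the theorem then follow at once.

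For (a) I would apply \eqref{wtf} at the two relevant states. Its first branch at $n=n_0-1$ (an admissible index, since $n_0-1\le n_0-1$) gives $W(n_0-1;\tilde{\underline{p}}(n_0))=W(n_0-1,n_0;\tilde{\underline{p}}(n_0))$, while its second branch gives $W(n_0;\tilde{\underline{p}}(n_0))=\frac{1}{\mu_{n_0+1}}+W(n_0-1,n_0;\tilde{\underline{p}}(n_0))$. Substituting into \eqref{condequilpthrsvr_V2}, the condition $\tilde{R}-W(n_0-1;\tilde{\underline{p}}(n_0))\ge 0$ becomes $W(n_0-1,n_0;\tilde{\underline{p}}(n_0))\le\tilde{R}$, and $\tilde{R}-W(n_0;\tilde{\underline{p}}(n_0))\le 0$ becomes $W(n_0-1,n_0;\tilde{\underline{p}}(n_0))\ge\tilde{R}-\frac{1}{\mu_{n_0+1}}$; their conjunction is exactly the two-sided inequality in \eqref{condequilpthrsvr_V3}. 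The quantity $W(n_0-1,n_0;\tilde{\underline{p}}(n_0))$ is the one delivered by the finite linear system \eqref{w0m}--\eqref{wnm} (with the $m=n_0$ simplifications, since $\tilde{p}_{n_0}=0$), so the criterion is effectively checkable.

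For (b), the range $\bigl(\tilde{R}-\frac{1}{M}\bigr)\mu_1\le n_0\le\tilde{R}M$ is precisely the necessary condition \eqref{neccondequilpthr}: substitute the bounds $\frac{n_0}{M}\le W(n_0-1;\tilde{\underline{p}}(n_0))\le\frac{n_0}{\mu_1}$ of Lemma~\ref{first res}(i) into the two inequalities just derived, exactly as \eqref{neccondequilpthr} was deduced from \eqref{condequilpthr_V1}. Since this range is automatically implied once the two-sided waiting-time inequality holds, listing it separately in \eqref{condequilpthrsvr_V3} does not affect the equivalence; its role is to confine the equilibrium search to finitely many candidate thresholds $n_0$.

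I do not expect a genuine obstacle here: the substantive difficulty---that the delay function stays nondecreasing even though a longer queue is served at a higher rate---has already been settled by Proposition~\ref{st_induction}, and Theorem~\ref{condequilpthrsvr} is its packaging. The only two points that need care are (i) reading $W(n_0;\tilde{\underline{p}}(n_0))$ off the second branch of \eqref{wtf}, i.e.\ interpreting it as the sojourn time of a customer who nonetheless joins at the balking state $n_0$, so that there are then $n_0+1$ customers present and a residual service at rate $\mu_{n_0+1}$ must elapse before any further arrival can join; and (ii) noticing that \emph{both} boundary conditions involve the single quantity $W(n_0-1,n_0;\tilde{\underline{p}}(n_0))$, which is exactly what turns a pair of separate conditions into one clean two-sided bound.
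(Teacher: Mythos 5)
Your proposal is correct and follows exactly the route the paper takes: its proof of Theorem~\ref{condequilpthrsvr} is stated as ``immediate by \eqref{wtf}, \eqref{neccondequilpthr} and \eqref{condequilpthrsvr_V2}'', which is precisely your substitution of the two branches of \eqref{wtf} into \eqref{condequilpthrsvr_V2} together with conjoining the range \eqref{neccondequilpthr}. You have simply written out the details the paper leaves implicit, including the correct reading of $W(n_0;\tilde{\underline{p}}(n_0))$ as the delay of a deviating customer who joins at the balking state.
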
   

\begin{proof} 
The proof is immediate by \eqref{wtf}, \eqref{neccondequilpthr} and \eqref{condequilpthrsvr_V2}.
\end{proof}

Theorem \ref{condequilpthrsvr} indicates the necessary steps that a finite algorithm must perform in order to identify the pure threshold equilibrium strategies. First, we derive the range of integers which correspond to possible equilibrium pure threshold strategies from \eqref{neccondequilpthr}. Next, for each candidate for equilibrium $\underline{\tilde{p}}(n_0)$, we compute  the waiting time $W(n_0-1,n_0;\underline{\tilde{p}}(n_0))$ solving the corresponding system of equations in \eqref{w0m} and \eqref{wnm} for $0\leq n\leq n_0-1$ and $n+1\leq m\leq n_0$. Finally, if the equilibrium condition stated in \eqref{condequilpthrsvr_V3} is verified for the corresponding value of $W(n_0-1,n_0;\underline{\tilde{p}}(n_0))$, then the corresponding pure threshold strategy $\underline{\tilde{p}}(n_0))$ is an equilibrium. Therefore, the number of equilibria may vary from $0$ to $\lfloor \tilde{R}\;M\rfloor-\lfloor \left(\tilde{R}-\frac{1}{M}\right)\mu_1\rfloor+1$, depending on the monotonicity of $W(n_0-1,n_0;\underline{\tilde{p}}(n_0))$ with respect to $n_0$ and the service rate prescribed at state $n_0+1$, i.e. $\mu_{n_0+1}$.  

In general a pure threshold equilibrium strategy is not unique. In Section \ref{sec-thres}, where we study a specific threshold-type form of the service policy, we construct examples with multiple pure threshold equilibria. 

\section{Mixed threshold strategies}\label{sec-Mixed}
In this section, we extend the analysis to mixed threshold strategies, in which the decision of a customer at the threshold state only is randomized. In general a mixed threshold strategy is determined by a real number $x>0$, as follows. Let $n_0=\lfloor x \rfloor+1$.
Then $p_n=1$ for any $n\leq n_0-2$ and $p_{\lfloor x \rfloor}=x-\lfloor x \rfloor\in(0,1)$. For coherence, we denote these strategies with $\underline{\hat{p}}(x)$ and their corresponding set with 
\begin{equation*}\hat{\Pi}_{TH}=\{\underline{\hat{p}}(x), x\in\Bbb{R}\} \supseteq\Pi_{TH}.
\end{equation*}


Since there exist multiple pure threshold equilibria, as we show in the following, it is interesting to ask, whether the consideration of  this specific type of randomized strategies which attain the threshold structure, may bridge the equilibrium analysis between pure and mixed threshold strategies, providing an algorithm for the derivation of mixed threshold equilibria relying on the existence of pure threshold equilibria. In many recent works, e.g. \cite{economou_kanta2008} and \cite{economou_manou2013}, there are cases where between consecutive pure threshold equilibria we can derive a mixed solution. In our case, the existence of a mixed threshold equilibrium strategy between consecutive pure threshold equilibria can be identified only numerically, as we show in the following sections. 

Considering the equilibrium conditions in \eqref{condequilRC} and letting $\hat{\Omega}_{TH}$ be the set of mixed threshold equilibrium strategies in the recurrent class, it follows that these differ from the conditions given in \eqref{condequilpthr_V1} for the pure threshold strategies only for $n=n_0-1$, where a potential customer randomizes her decision on joining with probability $ p_{n_0-1}=x-\lfloor x \rfloor$. Specifically, from \eqref{condequilRC}, it follows that the corresponding condition for $n=n_0-1$ in order $\underline{\hat{p}}(x)\in\hat{\Omega}_{TH}$, turns into the following equality 
\begin{equation}\label{condequilmth_limcust}
\tilde{R}-W(n_0-1;\underline{\hat{p}}(x))=0.
\end{equation}

Furthermore, the result of Proposition \ref{st_induction} holds in the framework of mixed threshold equilibria, using a slightly extended coupling argument in order to include the case of mixed threshold strategies, and the proof is quite similar along the same lines under this extended scheme. Specifically, we extend the coupling scheme stated above Proposition \ref{st_induction}, as follows. We assume that each customer tosses a coin  with probability of heads $p_{\lfloor x \rfloor}$ before her arrival to the system, in order to make a decision on joining or not if upon arrival finds $n_0-1$ customers in the system, and, we couple the two systems not only on customer arrivals and service requirements, but also on the predetermined join decision at state $n_0-1$. The monotonicity of $W(n;\underline{\hat{p}}(x))$ in $n$ can be completely proved following analogous arguments as in the proof of Proposition \ref{st_induction}.

The latter result, as in the case of pure threshold strategies, suffices to show that a tagged customer's generalized waiting time $W(n,n+1)$ is non-decreasing in $n$ for any mixed threshold strategy $\underline{\hat{p}}(x)$, and, thus $W(n;\underline{\hat{p}}(x))$ is non-decreasing in $n$ for any $\underline{\hat{p}}(x)$. Finally, due to the monotonic behavior of $W(n;\underline{\hat{p}}(x))$, the equilibrium conditions for $\underline{\hat{p}}(x)\in\hat{\Omega}_{TH}$, can be simplified to a single equation stated in the following Theorem.

\begin{theorem}\label{mtheq}
Given a mixed threshold joining strategy $\underline{\hat{p}}(x)$,
\begin{equation}\label{condequilmth_eq}
\underline{\hat{p}}(x)\in\hat{\Omega}_{TH}\hbox{ if and only if }W\left(\lfloor x \rfloor,\lfloor x \rfloor+1;\underline{\hat{p}}(x)\right)=\tilde{R}. 
\end{equation}
\end{theorem}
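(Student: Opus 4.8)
The plan is to specialize the recurrent-class equilibrium conditions \eqref{condequilRC} to the strategy $\underline{\hat{p}}(x)$ and then collapse them, using the monotonicity of the delay function, to the single equation \eqref{condequilmth_eq}. Write $n_0=\lfloor x\rfloor+1$, so that $n_0(\underline{\hat{p}}(x))=n_0$, $p_n=1$ for $n\le n_0-2$, $p_{n_0-1}=x-\lfloor x\rfloor\in(0,1)$, and $p_n=0$ for $n\ge n_0$. The only randomizing state below $n_0$ is $n_0-1$, so \eqref{condequilRC} reduces to: $\tilde{R}-W(n_0;\underline{\hat{p}}(x))\le 0$; $\tilde{R}-W(n;\underline{\hat{p}}(x))\ge 0$ for $n=0,\dots,n_0-1$; and $\tilde{R}-W(n_0-1;\underline{\hat{p}}(x))=0$. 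The first task is simply to record this reduction.

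For necessity ($\Rightarrow$) I would read off the equality at the randomizing state $n_0-1$, which gives $W(n_0-1;\underline{\hat{p}}(x))=\tilde{R}$, and then invoke the first branch of \eqref{wtf} to rewrite $W(n_0-1;\underline{\hat{p}}(x))=W(n_0-1,n_0;\underline{\hat{p}}(x))=W(\lfloor x\rfloor,\lfloor x\rfloor+1;\underline{\hat{p}}(x))$; this is exactly \eqref{condequilmth_eq}.

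For sufficiency ($\Leftarrow$) I would assume $W(\lfloor x\rfloor,\lfloor x\rfloor+1;\underline{\hat{p}}(x))=\tilde{R}$, equivalently $W(n_0-1;\underline{\hat{p}}(x))=\tilde{R}$, and verify the three conditions in turn. The equality at $n_0-1$ is immediate. For $n\le n_0-2$ I would use the monotonicity of $W(n;\underline{\hat{p}}(x))$ in $n$ — which, as indicated in the text preceding the theorem, follows from the extension of Proposition \ref{st_induction} under the coupling that additionally synchronizes the predetermined decision at state $n_0-1$, yielding $W(n,n+1;\underline{\hat{p}}(x))$ non-decreasing in $n$ for $n\le n_0-1$ and hence, via \eqref{wtf}, $W(\cdot;\underline{\hat{p}}(x))$ non-decreasing on $\{0,\dots,n_0-1\}$. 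Thus $W(n;\underline{\hat{p}}(x))\le W(n_0-1;\underline{\hat{p}}(x))=\tilde{R}$, so $\tilde{R}-W(n;\underline{\hat{p}}(x))\ge 0$. Finally, for the balking state $n_0$, the second branch of \eqref{wtf} gives $W(n_0;\underline{\hat{p}}(x))=\tfrac{1}{\mu_{n_0+1}}+W(n_0-1,n_0;\underline{\hat{p}}(x))=\tfrac{1}{\mu_{n_0+1}}+\tilde{R}>\tilde{R}$, since $0<\mu_{n_0+1}\le M<\infty$; hence $\tilde{R}-W(n_0;\underline{\hat{p}}(x))<0$, and all three conditions hold, so $\underline{\hat{p}}(x)\in\hat{\Omega}_{TH}$.

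I do not expect a substantial obstacle: once the monotonicity inherited from the extended Proposition \ref{st_induction} is in hand, the argument is pure bookkeeping with \eqref{condequilRC} and \eqref{wtf}. The only place that calls for a little care is the balking state $n_0$: one must use the convention of \eqref{wtf} that $W(n_0;\underline{\hat{p}}(x))$ denotes the delay of a customer who joins even though $\underline{\hat{p}}(x)$ prescribes balking, and observe that the extra residual-service term $1/\mu_{n_0+1}$ is strictly positive, so the inequality $\tilde{R}-W(n_0;\underline{\hat{p}}(x))\le 0$ is in fact automatically (and strictly) satisfied once the equation at $n_0-1$ holds.
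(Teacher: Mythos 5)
Your proposal is correct and follows essentially the same route as the paper: the paper proves the theorem implicitly in the paragraphs preceding it, by extending the coupling of Proposition \ref{st_induction} to get monotonicity of $W(n;\underline{\hat{p}}(x))$ in $n$ and then collapsing the recurrent-class conditions \eqref{condequilRC} to the single equality at the randomizing state $\lfloor x\rfloor$. Your explicit bookkeeping — in particular the observation that the balking condition at $n_0$ is automatic because $W(n_0;\underline{\hat{p}}(x))=\tfrac{1}{\mu_{n_0+1}}+\tilde{R}>\tilde{R}$ — is exactly the reason the paper can state a single equation rather than the two inequalities of the pure-threshold case.
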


%

\section{Threshold service rate policy}\label{sec-thres}

In this section, we examine the special case where the service administrator applies a threshold-based service rate policy defined by a positive integer $T$ and two values for the service rate, $\mu_l<\mu_h$, such that the service rate is set to the low value $\mu_l$ when the number of customers in the system is at or below $T$, and to the high value $\mu_h$ when the number of customers exceeds  $T$. We also assume that the service policy $(T,\mu_l,\mu_h)$ is known to all arriving customers. 

In the framework of pure threshold joining strategies, in order to analyze the equilibrium behavior under the threshold-based service policy $(T,\mu_l,\mu_h)$, we consider three cases for $n_0$ with respect to the service threshold $T$. 

Specifically, if $n_0<T$, then for any strategy $\underline{p}$ with $n_0(\underline{p})=n_0$ and, in particular for the pure threshold strategy $n_0$, the service rate is always kept at the low value $\mu_l$, thus, we have an $M/M/1$ queue with finite buffer size $n_0$ and exponentially distributed service times with rate $\mu_l$. In this case, the generalized waiting time is equal to 
\begin{equation}\label{ewtn0belT} W(n,m;\underline{p})=\frac{n+1}{\mu_l},\hbox{ for any }m,\;n \hbox{ with }n<m\leq n_0(\underline{p}),
\end{equation} 
The waiting time of an arriving customer who finds $n$ customers in the system and all follow mixed strategy $\underline{p}$ is equal to

\begin{equation}\label{wtlow}
W(n;\underline{p})=\frac{n+1}{\mu_l},\hbox{ for any }n=0,1,\ldots,n_0-1. 
\end{equation}

When $n_0=T$, the generalized waiting time $W(n,m;\underline{p})$ is still given by \eqref{ewtn0belT} for $n\leq n_0-1$, since the service rate is $\mu_l$. For $n=n_0=T$, we derive from \eqref{wtf} that $W(n_0;\underline{p})=\frac{1}{\mu_h}+W(n_0-1,n_0;\underline{p})$. In this case, a tagged customer who joins and finds $T$ customers in the system will force the service rate to switch to $\mu_h$, thus the residual service time of the customer in service will be exponential with rate $\mu_h$. In summary, the expected waiting time is given by

\begin{equation}\label{wtT}
W(n;\underline{\tilde{p}}(T))=\left\{\begin{array}[pos]{ll}
\frac{n+1}{\mu_l},&n=0,1,\ldots,T-1\\
\frac{1}{\mu_h}+\frac{n+1}{\mu_l},&n=T
\end{array}\right..
\end{equation}

Finally, for $n_0>T$, the service rate switch may occur several times, depending on the evolution of the system length until the tagged customer's departure. In order to derive the generalized waiting time function $W(n,m;\underline{p})$, we solve the system \eqref{w0m}, \eqref{wnm}, which now simplifies to:

For $n=0$:
\begin{equation}
\label{w0mhighth}W(0,m)=\frac{1}{\lambda p_m+\mu_h}+\frac{\lambda
p_m}{\lambda p_m+\mu_h}W(0,m+1),\hbox{ for } T+1\leq m\leq n_0,
\end{equation}
\begin{equation}
\label{w0mlowth}W(0,m)=\frac{1}{\lambda p_m+\mu_l}+\frac{\lambda
p_m}{\lambda p_m+\mu_l}W(0,m+1),\hbox{ for }1\leq m\leq T.
\end{equation}

For $1\leq n\leq n_0-1$:
\begin{eqnarray}
\label{wnmhighth}W(n,m)=\frac{1}{\lambda p_m+\mu_h}+\frac{\lambda
p_m}{\lambda p_m+\mu_h}W(n,m+1)+\frac{\mu_h}{\lambda p_m+\mu_h}W(n-1,m-1),\\\nonumber\hbox{ for }\max\{n+1,T+1\}\leq m\leq n_0,
\end{eqnarray}
\begin{eqnarray}
\label{wnmlowth}W(n,m)=\frac{1}{\lambda p_m+\mu_l}+\frac{\lambda
p_m}{\lambda p_m+\mu_l}W(n,m+1)+\frac{\mu_l}{\lambda p_m+\mu_l}W(n-1,m-1),\\\nonumber\hbox{ for } n+1\leq m\leq T,
\end{eqnarray}


and the corresponding waiting time function $W(n;\underline{p})$ takes the following form 
\begin{equation}\label{expwtf_th}W(n;\underline{p})=\left\{\begin{array}[pos]{ll}
W(n,n+1),&0\leq n\leq n_0(\underline{p})-1\\
\frac{1}{\mu_h}+W(n_0-1,n_0),& n=n_0(\underline{p})
\end{array}\right.,
\end{equation} 
since $\mu_{n_0+1}=\mu_h$.

As we have already discussed, we proceed with the equilibrium analysis considering only pure threshold joining strategies, i.e. for $\tilde{\underline{p}}(n_0)\in \Pi_{TH}$. We consider the same three cases for $n_0$ as in the derivation of $W(n;\underline{p})$. We summarize the equilibrium analysis in Theorem \ref{condequilpthr}. 

\begin{theorem}\label{condequilpthr}
Given a service policy $(T,\mu_l,\mu_h)$, it follows that:
\begin{itemize}
\item[i.] In the range $\left\{0,1,\ldots,T\right\}$, there exist at most two equilibrium threshold strategies $\underline{\tilde{p}}(\hat{n}_0)$ such that: 

\begin{itemize}
\item[a.] If $\tilde{R}\mu_l\leq T$ and $\tilde{R}\mu_l\in \Bbb{Z}$, then there exist two distinct equilibria with $\hat{n}_0^{(1)}=\tilde{R}\mu_l-1$ and $\hat{n}_0^{(2)}=\tilde{R}\mu_l$. 

\item[b.] If $\tilde{R}\mu_l<T$ and $\tilde{R}\mu_l\not\in\Bbb{Z}$, then $\hat{n}_0=\lfloor \tilde{R}\mu_l \rfloor$ is the unique equilibrium. 

\item[c.] If $T<\tilde{R}\mu_l\leq T+\frac{\mu_l}{\mu_h}$ , then $\hat{n}_0=T=\lfloor\tilde{R}\mu_l\rfloor$ is the unique equilibrium.

\item[d.] If $\tilde{R}\mu_l> T+\frac{\mu_l}{\mu_h}$, then there is no $\underline{\tilde{p}}(n_0)$ equilibrium for $n_0\in\left\{0,1,\ldots,T\right\}$. 

\end{itemize}

\item[ii.]  In the range $\left\{T+1,\ldots\right\}$, a pure threshold strategy $\underline{\tilde{p}}(\hat{n}_0)$ is equilibrium if and only if  
\begin{equation}
\label{condequilpthr_V3} L\leq\hat{n}_0\leq U \hbox{ and } \tilde{R}-\frac{1}{\mu_h}\leq W\left(\hat{n}_0-1,\hat{n}_0;\tilde{\underline{p}}(\hat{n}_0)\right)\leq \tilde{R},
\end{equation}
where $L= \max\{\left(\tilde{R}-\frac{1}{\mu_h}\right)\mu_l,T+1\} $ and $U=\max\{ \tilde{R}\mu_h,T+1\} $.
\end{itemize}
\end{theorem}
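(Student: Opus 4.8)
The plan is to treat the two parts by quite different means: part~(i) by an explicit computation, exploiting that below the service threshold the delay function has a closed form, and part~(ii) simply by specializing the general characterization of Theorem~\ref{condequilpthrsvr}.

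For part~(i) the key observation is that when the candidate threshold satisfies $n_0\le T$, every state relevant to the equilibrium conditions carries at most $T$ customers and is therefore served at the low rate $\mu_l$; the only exception is the state $n_0=T$, where a tagged customer who finds $T$ present and nevertheless joins raises the population to $T+1$, so that the first ensuing departure occurs at rate $\mu_h$ before the system drains again at $\mu_l$. Combining \eqref{wtlow} with the $n=n_0$ branch of \eqref{wtf} and \eqref{ewtn0belT}, I would record the closed forms $W(n;\tilde{\underline{p}}(n_0))=\tfrac{n+1}{\mu_l}$ for $n\le n_0-1$, $W(n_0;\tilde{\underline{p}}(n_0))=\tfrac{n_0+1}{\mu_l}$ when $n_0\le T-1$, and $W(T;\tilde{\underline{p}}(T))=\tfrac1{\mu_h}+\tfrac{T}{\mu_l}$. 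Since these values increase in $n$, the reduced conditions \eqref{condequilpthrsvr_V2} apply and amount to $W(n_0-1;\tilde{\underline{p}}(n_0))\le\tilde R\le W(n_0;\tilde{\underline{p}}(n_0))$, that is, $\mu_l\tilde R-1\le n_0\le\mu_l\tilde R$ for $n_0\le T-1$ and $T\le\mu_l\tilde R\le T+\tfrac{\mu_l}{\mu_h}$ for $n_0=T$. It then remains only to enumerate the integers $n_0\in\{1,\dots,T\}$ satisfying these inequalities: the double inequality $\mu_l\tilde R-1\le n_0\le\mu_l\tilde R$ has the two solutions $\mu_l\tilde R-1$ and $\mu_l\tilde R$ when $\mu_l\tilde R\in\mathbb Z$ and the single solution $\lfloor\mu_l\tilde R\rfloor$ otherwise, while the condition for $n_0=T$ overlaps with the constraint $n_0\le T-1$ coming from the former only at the point $\mu_l\tilde R=T$. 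Sorting by the position of $\mu_l\tilde R$ relative to $T$ and to $T+\mu_l/\mu_h$, and by whether $\mu_l\tilde R$ is an integer, reproduces exactly (a)--(d); in particular, $\mu_l\tilde R>T+\mu_l/\mu_h$ makes both inequalities fail for every $n_0\le T$, which is~(d).

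For part~(ii) I would merely read off Theorem~\ref{condequilpthrsvr} after the substitutions $\mu_1=\mu_l$ (valid since $T\ge1$), $M=\lim_n\mu_n=\mu_h$, and $\mu_{n_0+1}=\mu_h$ (valid since $n_0\ge T+1$ forces $n_0+1>T$). With these, \eqref{condequilpthrsvr_V3} turns into $(\tilde R-\tfrac1{\mu_h})\mu_l\le n_0\le\tilde R\mu_h$ together with $\tilde R-\tfrac1{\mu_h}\le W(n_0-1,n_0;\tilde{\underline{p}}(n_0))\le\tilde R$. Intersecting the range with the standing restriction $n_0\ge T+1$ gives the lower cutoff $L=\max\{(\tilde R-\tfrac1{\mu_h})\mu_l,\,T+1\}$, and one may take $U=\max\{\tilde R\mu_h,\,T+1\}$ for the upper cutoff, because in the degenerate situation $\tilde R\mu_h<T+1$ the only remaining candidate $n_0=T+1$ is ruled out by the waiting-time bound: indeed $W(T,T+1;\tilde{\underline{p}}(T+1))=W(T;\tilde{\underline{p}}(T+1))\ge\tfrac{T+1}{\mu_h}>\tilde R$ by Lemma~\ref{first res}(i). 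This is precisely \eqref{condequilpthr_V3}.

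I do not expect any genuinely deep step, since Proposition~\ref{st_induction} and Theorem~\ref{condequilpthrsvr} do the heavy lifting. The point that will require care is the delay at the boundary state $n_0=T$: correctly accounting for the single service rendered at the high rate $\mu_h$ is exactly what produces the $\mu_l/\mu_h$ term separating cases (c) and~(d). The only remaining work is the routine enumeration of integer solutions of $\mu_l\tilde R-1\le n_0\le\mu_l\tilde R$, where the dichotomy $\mu_l\tilde R\in\mathbb Z$ versus $\mu_l\tilde R\notin\mathbb Z$ is what distinguishes the two-equilibria case (a) from the unique-equilibrium cases (b) and~(c).
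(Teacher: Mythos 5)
Your proof is correct and follows essentially the same route as the paper: closed-form delays below the service threshold plus enumeration of the integers in $[\tilde{R}\mu_l-1,\tilde{R}\mu_l]$ (and the separate boundary condition at $n_0=T$) for part (i), and specialization of Theorem \ref{condequilpthrsvr} with $\mu_1=\mu_l$, $M=\mu_{n_0+1}=\mu_h$, intersected with $n_0\geq T+1$, for part (ii). Two of your details are in fact slightly sharper than the paper's write-up: you use the value $W(T;\tilde{\underline{p}}(T))=\frac{1}{\mu_h}+\frac{T}{\mu_l}$, which is the one consistent with \eqref{wtf} and \eqref{ewtn0belT} and with the $T+\frac{\mu_l}{\mu_h}$ cutoff separating cases (c) and (d) (the display \eqref{wtT} writes $\frac{T+1}{\mu_l}$ instead of $\frac{T}{\mu_l}$), and you justify the ``max'' in $U$ by ruling out the candidate $n_0=T+1$ via Lemma \ref{first res}(i) when $\tilde{R}\mu_h<T+1$, a point the paper leaves implicit.
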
   

\begin{proof}

{\it{i.}} If $n_0<T$, it follows from \eqref{wtlow} and \eqref{condequilpthrsvr_V3} that: 
\begin{equation}\label{condequillow}
\tilde{\underline{p}}(n_0)\in\Omega_{TH}\hbox{ if and only if }\tilde{R}\mu_l-1\leq
n_0\leq\tilde{R}\mu_l,
\end{equation}
whereas if all customers follow a pure threshold strategy with $n_0=T$, then from \eqref{wtT} and \eqref{condequilpthrsvr_V3}, we derive:
\begin{equation}\label{condequilT}
\hbox{If } \tilde{\underline{p}}(T)\in\Omega_{TH}\Rightarrow \left(\tilde{R}-\frac{1}{\mu_h}\right)\mu_l\leq
T\leq\tilde{R}\mu_l, \hbox{ and,}
\end{equation}
\begin{equation}\label{condequilT_inv}
\hbox{ for } T=\lfloor \tilde{R}\mu_l \rfloor: if \left(\tilde{R}-\frac{1}{\mu_h}\right)\mu_l\leq
T\leq\tilde{R}\mu_l \Rightarrow \underline{\tilde{p}}(T)\in
\Omega_{TH}.
\end{equation}

Therefore, if $\tilde{R}\mu_l$ is an integer and $\tilde{R}\mu_l < T$, then from \eqref{condequillow} both $\hat{n}_0=\tilde{R}\mu_l-1$ and $\hat{n}_0=\tilde{R}\mu_l$ are equilibria. On the other hand, if $T=\tilde{R}\mu_l$, it follows immediate from \eqref{condequilT_inv} that the pure threshold strategy $\underline{\tilde{p}}(T)$ is equilibrium. Furthermore, for $T=\tilde{R}\mu_l$, $\underline{\tilde{p}}(T-1)$ is also an equilibrium pure threshold strategy, since it satisfies \eqref{condequillow}. Thus, case {\it{i(a)}} follows. 

On the other hand, if $\tilde{R}\mu_l$ is not an integer, we consider the following cases for the $\lfloor \tilde{R}\mu_l \rfloor\leq \tilde{R}\mu_l$.

First, if $\lfloor \tilde{R}\mu_l \rfloor< T$, then the pure threshold strategy $\underline{\tilde{p}}(\hat{n}_0)$ with $\hat{n}_0=\lfloor \tilde{R}\mu_l \rfloor$ satisfies \eqref{condequillow}, and also is the unique integer in this interval since $\lfloor \tilde{R}\mu_l \rfloor-1\leq \tilde{R}\mu_l -1<\tilde{R}\mu_l-\frac{\mu_l}{\mu_h}$. Therefore, $\hat{n}_0=\lfloor \tilde{R}\mu_l \rfloor$ is the unique equilibrium threshold in the range of $\{0,\dots,T-1\}$. 

On the other hand, if  $\lfloor \tilde{R}\mu_l \rfloor=T$, it follows from \eqref{condequilT_inv}, that $\underline{\tilde{p}}(T)$ is an equilibrium in the range of $n_0\leq T$, since $\left(\tilde{R}-\frac{1}{\mu_h}\right)\mu_l\leq\lfloor \tilde{R}\mu_l \rfloor$. In addition,  any threshold strategy with $n_0\leq T-1=\lfloor \tilde{R}\mu_l \rfloor-1<\tilde{R}\mu_l-1$ does not satisfy \eqref{condequillow}, and, thus $\hat{n}_0=T$ is the unique equilibrium in the range of $\{0,\ldots,T\}$, which concludes the proof of {\it{i(b)}}.

Finally, if $\lfloor \tilde{R}\mu_l \rfloor>T+1$, then \eqref{condequillow} does not hold for any $n_0\leq T-1$, and \eqref{condequilT}, \eqref{condequilT_inv} also do not hold. Furthermore, the pure threshold strategy $\underline{\tilde{p}}(T)$ with $T=\lfloor \tilde{R}\mu_l \rfloor$ cannot be an equilibrium since in this case $T>T+1$, which is a contradiction.

Note that, any equilibrium with $\hat{n}_0<T$ coincides with the corresponding equilibrium in \cite{naor}, since under this strategy the system is an $M/M/1$ queue with service rate equal to $\mu_l$. Note that, for $n_0$ in this range, \eqref{condequillow} does not follow from \eqref{neccondequilpthr} by setting $\mu_1=\mu_l$ and $M=\mu_h$ as dictated by the policy $(T,\mu_l,\mu_h)$. Actually, it is a stricter version of \eqref{neccondequilpthr} since for $n_0<T$ the queueing system in steady state will employ only the low service rate, and, thus, the actual value of the upper bound will be equal to $\mu_l$. 

{\it{ii.}} For a pure threshold strategy $\underline{\tilde{p}}(n_0)$ with $n_0> T$, the higher service rate $\mu_h$ will be employed since customers may join in states higher than the service threshold. 

Therefore, for this case $M=\mu_h$, and, thus we can rewrite the range of possible equilibria given in \eqref{neccondequilpthr}, as follows:
\begin{equation}
\label{neccondequilpthr_scase}
\max\{(\tilde{R}-\frac{1}{\mu_h})\mu_l,T+1\}\leq n_0\leq \max\{\tilde{R}\mu_h, T+1\}.
\end{equation}

The latter result provides a range of integers greater than $T$, which correspond to equilibrium pure threshold strategies $\underline{\tilde{p}}(n_0)$, if they also satisfy the condition given in \eqref{condequilpthrsvr_V3} for $\mu_{n_0+1}=\mu_h$, since the corresponding threshold policy is non-decreasing in $n$, and, thus the result of Theorem \ref{condequilpthrsvr} still holds.
\end{proof}

Thus, for any threshold-based service policy there may exist multiple pure threshold equilibria which can be derived explicitly from the parameters in the range $\{0,\ldots,T\}$. 
 
From the algorithmic point of view for threshold strategies with $n_0>T$, candidates for pure threshold equilibrium strategies are finite, and at most $U-L+1$ in number. They can be identified numerically, by solving the system of equation in \eqref{w0mhighth} - \eqref{wnmlowth} for each possible equilibrium $n_0\in[L,U]$, examining whether the solution satisfies \eqref{condequilpthr_V3}. A numerical analysis for the equilibrium pure threshold strategies for varying values of the service reward $R$ is presented in the following section. 

In the framework of mixed threshold joining strategies $\underline
{\hat{p}}(x)$, an interesting question is whether there may exist mixed threshold equilibria between consecutive pure threshold equilibria.

In the range $\{0,\ldots,T\}$, there exist two distinct pure threshold equilibria if and only if the parameters $\tilde{R}$ and $\mu_l$ satisfy the conditions prescribed in case {\it{i(a)}} of Theorem \ref{condequilpthr}. In this case, the pure threshold equilibria are $\hat{n}^{(1)}_0=\tilde{R}\mu_l-1$ and $\hat{n}^{(2)}_0=\tilde{R}\mu_l$, which are consecutive, since $\tilde{R}\mu_l\in\Bbb{Z}$. Considering the generalized waiting time of a joining customer for $n=n_0-1$, it follows from \eqref{wtT}, which is also valid for any mixed threshold strategy $\underline{\hat{p}}(x)$, that 
\begin{equation*}
W(\hat{n}^{(1)}_0,\hat{n}^{(1)}_0+1;\underline{\hat{p}}(x))=W(\hat{n}^{(1)}_0,\hat{n}^{(2)}_0;\underline{\hat{p}}(x))=\frac{\hat{n}^{(2)}_0}{\mu_l}=\frac{\tilde{R}\mu_l}{\mu_l}=\tilde{R}, \hbox{ for any }\underline{\hat{p}}(x).
\end{equation*}
Therefore, any $x\in(\tilde{R}\mu_l-1,\tilde{R}\mu_l)$ when $\tilde{R}\mu_l\in\Bbb{Z}$ determines an equilibrium mixed threshold joining strategy, since the equilibrium condition in \eqref{condequilmth_limcust} is satisfied.

On the other hand, in the range $\{T+1, T+2,\ldots\}$, we can show that there exist at least one mixed threshold equilibrium between two consecutive pure threshold equilibria, under certain conditions. Indeed, for any two consecutive pure threshold equilibria $\hat{n}_0-1,\;\hat{n}_0$ in the range $\{T+1, T+2,\ldots\}$, we consider the mixed threshold strategies $\underline{\hat{p}}(x)$ with $T<\hat{n}_0-1\leq x<\hat{n}_0$ and the function 
\begin{equation}\label{fungwt}
w(x)=W\left(\lfloor x \rfloor,\lfloor x \rfloor+1;\underline{\hat{p}}(x)\right)=W\left(\hat{n}_0-1,\hat{n}_0;\underline{\hat{p}}(x)\right),
\end{equation}
which refers to the equilibrium condition stated in Theorem \ref{mtheq}. Note that, the value of $w(x)$ for any $x\in[\hat{n}_0-1,\hat{n}_0)$ is derived from the solution of the linear system \eqref{w0mhighth} -\eqref{wnmlowth}, setting $m=\hat{n}_0-1$, i.e., 
\begin{equation}\label{gweqn0-1service}
W(0,\hat{n}_0-1)=\frac{1}{\lambda p_{\hat{n}_0-1}+\mu_h}+\frac{\lambda p_{\hat{n}_0-1}}{\lambda p_{\hat{n}_0-1}+\mu_h}W(0,\hat{n}_0), \hbox{ for } n=0, 
\end{equation}
  
\begin{equation}\label{gweqn0-1}
W(n,\hat{n}_0-1)=\frac{1}{\lambda p_{\hat{n}_0-1}+\mu_h}+\frac{\lambda p_{\hat{n}_0-1}}{\lambda p_{\hat{n}_0-1}+\mu_h}W(\hat{n}_0-1,\hat{n}_0)+\frac{\mu_h}{\lambda p_{\hat{n}_0-1}+\mu_h}W(n-1,\hat{n}_0-2), 
\end{equation}
for $n=1,\ldots,\hat{n}_0-1$, where $p_{\hat{n}_0-1}=x-(\hat{n}_0-1)$ is the join probability at state $\hat{n}_0-1$.

Since $x$ varies only $p_{n_0-1}$, and, thus, the coefficients of the linear system in \eqref{w0mhighth} -\eqref{wnmlowth} with \eqref{gweqn0-1service}, \eqref{gweqn0-1} defined for any different value of $x$, we can show that its solution as a function of $x$ is continuous, and, thus $w(x)$ is also continuous in $[\hat{n}_0-1,\hat{n}_0]$. 

Moreover, we assume that $W(\hat{n}_0-2,\hat{n}_0-1;\underline{\tilde{p}}(\hat{n}_0-1))>\tilde{R}-\frac{1}{\mu_h}$ and $W(\hat{n}_0-1,\hat{n}_0;\underline{\tilde{p}}(\hat{n}_0))<\tilde{R}$, which also hold for the corresponding mixed threshold strategies, since by their definition, mixed threshold strategies coincide with the corresponding pure threshold ones for any $x\in\Bbb{Z}$. Therefore, the existence of a solution of $w(x)=\tilde{R}$ in $(\hat{n}_0-1,\hat{n}_0)$ can be proved, applying Bolzano's Theorem for $w(x)$ in the interval $[\hat{n}_0-1,\hat{n}_0]$. Indeed, we derive that
\begin{equation*}
h(\hat{n}_0)=W\left(\hat{n}_0-1,\hat{n}_0;\underline{\hat{p}}(\hat{n}_0)\right)=W(\hat{n}_0-1,\hat{n}_0;\underline{\tilde{p}}(\hat{n}_0))<\tilde{R}.
\end{equation*}
Moreover, 
\begin{equation*}
h(\hat{n}_0-1)=W\left(\hat{n}_0-1,\hat{n}_0;\underline{\hat{p}}(\hat{n}_0-1)\right),
\end{equation*}
which refers to the expected waiting time of tagged customer which has $\hat{n}_0-1$ customers upfront and all customers follow $\underline{\hat{p}}(\hat{n}_0-1)$, which prescribes at new arrivals to balk. Since the only possible transition is for the customer in service to leave, it follows that 
\begin{eqnarray*}
h(\hat{n}_0-1)=W\left(\hat{n}_0-1,\hat{n}_0;\underline{\hat{p}}(\hat{n}_0-1)\right)=\frac{1}{\mu_h}+ W(\hat{n}_0-2,\hat{n}_0-1;\underline{\hat{p}}(\hat{n}_0-1))\\=\frac{1}{\mu_h}+W(\hat{n}_0-2,\hat{n}_0-1;\underline{\tilde{p}}(\hat{n}_0-1))>\frac{1}{\mu_h}+\tilde{R}-\frac{1}{\mu_h}=\tilde{R},
\end{eqnarray*}
which completes the proof. Note that, under the mixed threshold strategy $\underline{\hat{p}}(\hat{n}_0-1)$, $\left(\hat{n}_0-1,\hat{n}_0\right)$ is a transient state.

On the other hand, if $W(\hat{n}_0-2,\hat{n}_0-1;\underline{\tilde{p}}(\hat{n}_0-1))\geq\tilde{R}-\frac{1}{\mu_h}$ or $W(\hat{n}_0-1,\hat{n}_0;\underline{\tilde{p}}(\hat{n}_0))\leq\tilde{R}$, the existence of mixed threshold equilibria depend on the monotonicity of $w(x)$, and, there may be extreme cases where none of the mixed threshold strategies defined for $x\in(\hat{n}_0-1,\hat{n}_0$ cannot be an equilibrium.

In order to investigate the derivation of mixed threshold equilibria, especially those between consecutive pure threshold equilibria, we perform several numerical examples deriving $w(x)=W\left(\lfloor x \rfloor+1,\lfloor x \rfloor+1+1;\underline{\hat{p}}(x)\right)$ for $x>T$ in the following section. These numerical results showed that $w(x)$ is left continuous and decreasing in $x$, and, thus there exist a unique mixed threshold equilibrium strategy between consecutive pure threshold equilibria. On the other hand, there may also exist mixed threshold equilibria which are not characterized by consecutive pure threshold equilibria, as we show in the following section.      
    
\section{Computational Results}\label{sec-comp}
In this section we explore the number and distribution of pure and mixed threshold equilibria as a function of the   service reward $R$, and relate it to the form of the delay function. For concreteness we consider the case of a $T$ threshold service policy, as in Section \ref{sec-thres}. Under this service policy, the threshold equilibrium strategies are determined according to  Theorem \ref{condequilpthr} and the discussion in Section \ref{sec-thres}.

We analyze a case study for  a system with $\lambda=3,\;\mu_l=2,\;\mu_h=5$, service threshold $T=23$, service reward $R$ varying between $8$ and $13$ and  waiting cost  $C=1$, thus, $\tilde{R}=R$.

The pure threshold equilibrium strategies are presented in Table \ref{ethres_val}, where the second and third columns contain the equilibria in the ranges $\{0,\ldots,T\}$ and $\{T+1,\ldots\}$, respectively. The last two columns contain the lower and upper bound for possible equilbrium strategies above $T$, $L=\max\{(R-\frac{1}{\mu_h})\mu_l,T+1\}$ and $U=R\mu_h$, as in \eqref{neccondequilpthr_scase}.

\begin{table}[H]
\begin{center}
\begin{tabular}{|c|c|ccc|}
  \hline
  &\multicolumn{4}{|c|}{$\underline{\tilde{p}}(n_0)\in\Omega_{TH}$}\\
  \hline
  $R$&$n_0\leq T$&$n_0>T$&$L$&$U$\\
  \hline
  8.0&   15, 16&    $-$ & 24 & 40\\
  8.15&   16&    $26,\ldots,32$ & 24 & 40.75\\
  8.5&   16, 17&    $25,\;36,37$& 24 & 42.5\\
  9.5&   18, 19&    45& 24 & 47.5\\
  13&   $-$&    64& 25.6 & 65\\
  \hline
\end{tabular}
\end{center}
\caption{Threshold values $n_0$ of pure threshold equilibria as a function of $R$, for $\lambda=3,\;\mu_l=2,\;\mu_h=5,\;T=23,\;C=1$.}\label{ethres_val}
\end{table}

From Table \ref{ethres_val}, we first observe that the number of pure threshold  equilibria varies with $R$. For low or high values of $R$ the equilibrium is unique, whereas for intermediate values  there exist multiple equilibria with threshold values either below or above the service threshold $T$.  For example, for  $R=8.5$  there are two  equilibria with threshold values $\hat{n}_0=16$ and $\hat{n}_0=17$ below the service threshold $T=23$ and multiple above $T$.

The pure threshold equilibria below the service threshold $T$ are as characterized in Theorem \ref{condequilpthr} and their number varies between 0 and 2. For values above $T$ the theorem does not uniquely determine their number but only their range and the condition they satisfy. We observe that in general there are multiple pure threshold equilibria in the range between $L$ and $U$ with values distributed in one or two intervals of successive integers. Furthermore, there is no general conclusion as to how sharp are the bounds $L, U$, since there are cases where their range is either narrow or wide compared to the actual distribution of equilibria inside.

We can obtain further insights on the existence of a single or multiple equilibria if we analyze the behavior of $W(n_0-1,n_0;\underline{\hat{p}}(n_0))$, which corresponds to the expected delay of the marginal customer who decides to enter when all other customers follow strategy $n_0$ and there are already $n_0-1$ customers present in the system. A graph of this function is presented in Figure \ref{wtsensR} for values of $n_0$ between $0$ and $40$.

\begin{figure}[H]
\centering
\includegraphics[scale=0.75]{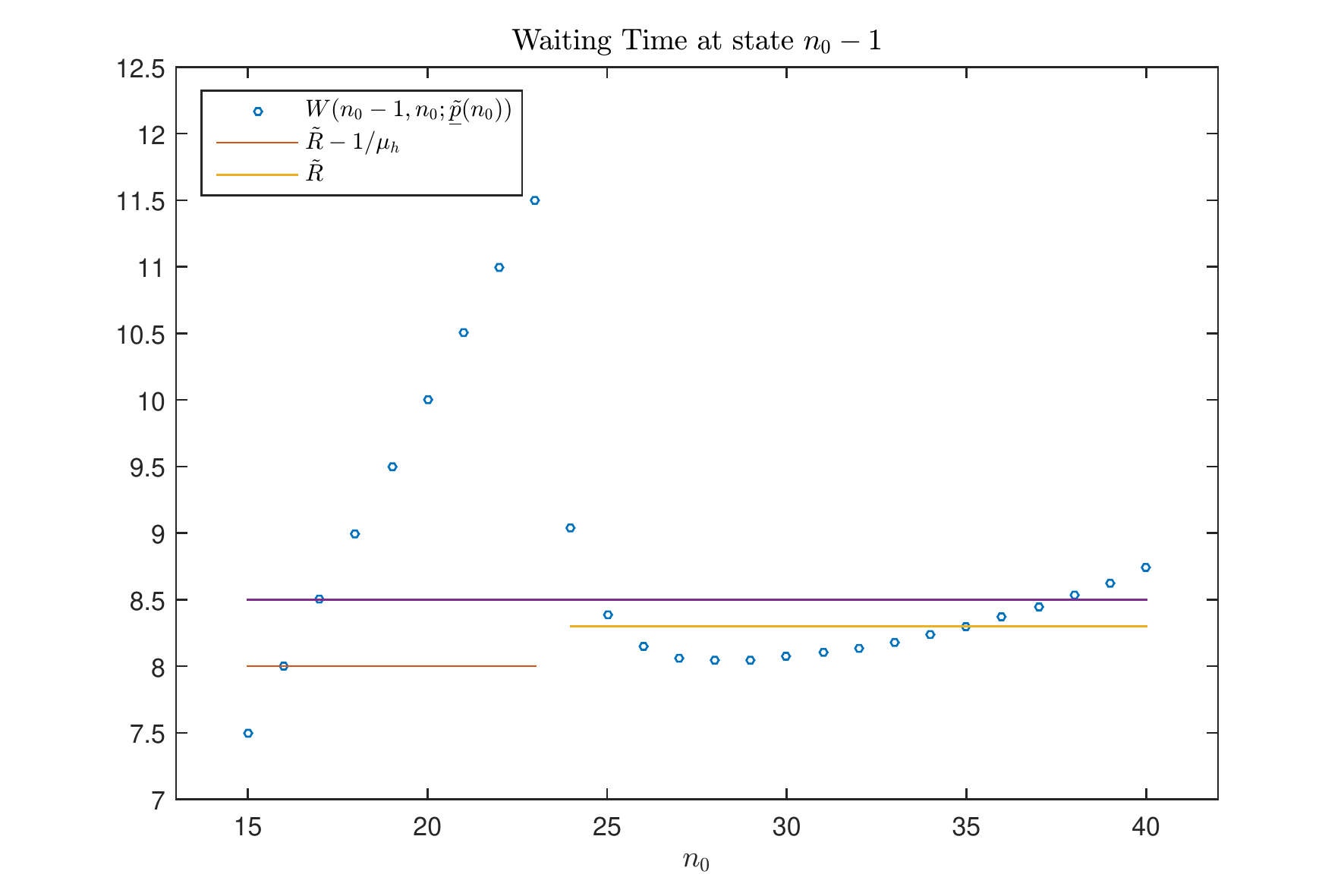}
\caption{Waiting time at state $n_0-1$ as a function of $n_0$, for $\lambda=3,\;\mu_l=2,\;\mu_h=5,\;T=23$, and equilibrium range for $\tilde{R}=8.5$.}\label{wtsensR}
\end{figure}

The behavior of  $W(n_0-1,n_0;\underline{\hat{p}}(n_0))$ explains why for some values of $R$ the pure threshold equilibria are distributed in two separate integer intervals. 

Note that, in contrast to Proposition \ref{st_induction}, which implies that $W(n-1,n;\underline{\hat{p}}(n_0))$ is shown to be increasing in $n$ for a fixed $n_0$, the marginal delay function  $W(n_0-1,n_0;\underline{\hat{p}}(n_0))$  which is involved in the equilibrium condition is not generally monotone in $n_0$. For  $n_0\leq T$ the service rate is always at the low value $\mu_l$ thus the marginal customer's delay is increasing in $n_0$. On the other hand when $n_0$ is high, the service rate is almost always at the high value, and the switching effect is negligible, thus, the delay is also increasing in $n_0$, due to the increasing number of customers the tagged customer encounters. Finally, fo values of $n_0$ close to the service switch threshold $T$ the delay function has a range of decreasing values. This happens because as $n_0$ increases, the proportion of the time that the server works under the fats rate also increases and as a result the delay of the entering customer is reduced. As $n_0$ increases even further, the additional load imposed on the system by the entering customers exceeds the benefit due to the faster service rate and the delay function start increasing again.

We finally consider mixed threshold equilibria $\underline{\hat{p}}(x)$. The equilibrium condition is given in Theorem \ref{mtheq}, while in Section \ref{sec-thres} it was shown that for the threshold service strategy between any two successive pure threshold equilibria there exists a mixed threshold equilibrium strategy. In Figure \ref{wtmixedsensR} we present the graph of the marginal customer's delay function $W(\lfloor x \rfloor,\lfloor x \rfloor+1;\underline{\hat{p}}(x))$  for $x\in(24,39]$. The graph shows how the delay function presented in Figure \ref{wtsensR} for pure threshold strategies is extended to the mixed threshold case. The mixed threshold equilibrium strategies correspond to the points of intersection of the graph of $W(\lfloor x \rfloor,\lfloor x \rfloor+1;\underline{\hat{p}}(x))$ with the horizontal line at $R$.

\begin{figure}[H]
\centering
\includegraphics[scale=0.75]{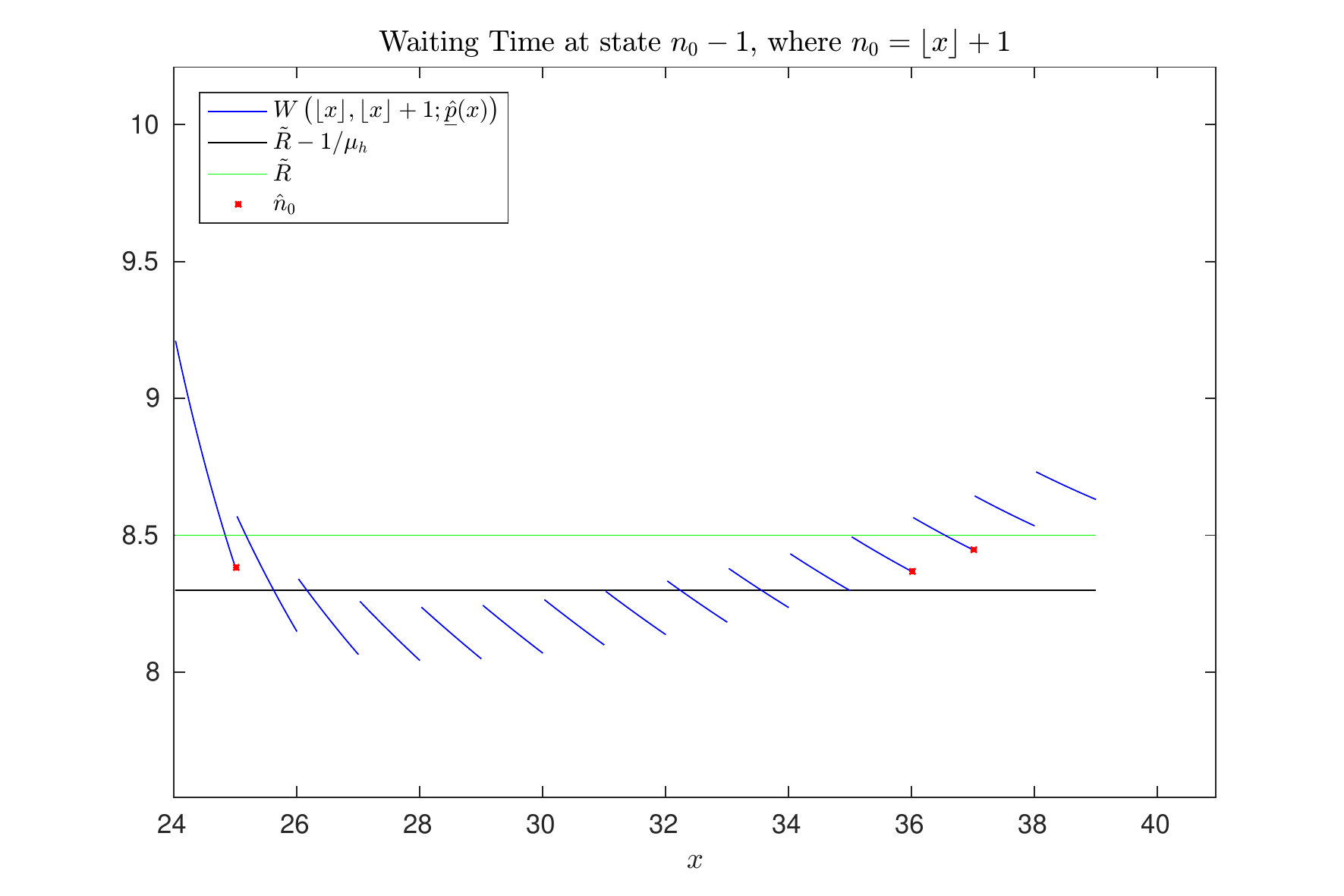}
\caption{Waiting time at state $\lfloor x \rfloor$ as a function of $x$, for $\lambda=3,\;\mu_l=2,\;\mu_h=5,\;T=23$, and equilibrium range for $\tilde{R}=8.5$.}\label{wtmixedsensR}
\end{figure}

We first observe that the delay function is piecewise decreasing in intervals between successive integers and has discontinuities at integer values where it is left-continuous. This behavior is expected. Consider $x$ varying in the interval $[i, i+1)$. For all such $x$ an arriving customer who finds $i$ customers waiting and joins is better off in terms of delay if $x$ is higher, since in this case the server will spend more time in the high mode on average. On the other hand, when $x$ becomes equal to $i+1$, the delay function has a jump due to the one additional customer present in the system. 

The graph of the delay function explains why there exists a mixed threshold equilbrium strategy between two pure threshold equilibria, as discussed in Section \ref{sec-thres}. However it also shows that there are cases of mixed threshold equilibrium strategies where the corresponding integers are not both equilibria. Indeed,  the  pure threshold strategy with  $n_0=25$ is an equilibrium, while for  $n_0=26$ it is not. However there exists a mixed threshold equilibrium $x\in (25,26)$.   

\section{Conclusion and Extensions}
\label{sec-Summary}
\noindent In this paper, we considered the problem of customer equilibrium joining behavior in an M/M/1 queue with
dynamically adjusted non-decreasing service rate with respect to the level of congestion and full information. We examined the observable case of this model, where each arriving customer is aware of the service policy and the current queue length.

We have proved that strategies where balking is not an option cannot be equilibria. For strategies where balking is prescribed at a finite state, we showed that we can restrict attention to policies for which equilibrium conditions hold only for states in the recurrent class. The equilibrium analysis was performed in the class of pure or mixed threshold strategies, i.e. strategies where customers join if and only if they find less customers in the system than a certain threshold value with possible randomization at the threshold state. For this class, we have proved that the expected sojourn time is non-decreasing in the number of observed customers upon arrival, and, thus, the equilibrium  condition depends only on its corresponding value at the threshold state. 

Furthermore, we have derived upper and lower bounds for the pure threshold equiibria, which implies that they can be determined using a finite search algorithm. Also, we have analyzed the special case of a service control policy, based on a single switching threshold and two values of service rate. For this policy, we showed further structure for the equilibrium strategies, analytically and numerically. 

The general problem of customer strategic behavior for joining an observable queueing system with dynamic adjusted service rate can be extended in several directions. In terms of the form of the service control policy, one might consider an alternative where instead of increasing the service speed with congestion, one or more standby servers are activated. Such a policy is more relevant in situations where the service is provided by human workers. There are also several extensions with respect to the level of available information in both directions. In this paper, we have examined the case where arriving customers are fully informed on the service policy and the current queue length upon arrival. However, one may also assume that arriving customers are not informed on the exact number of customers in the system but instead they are given a range. In terms of the available information on service, one could consider the case where the service threshold is not announced in an observable system but customers observe the queue length and the status of the server, and thus they may estimate the service threshold. Finally, a more challenging problem would be to consider the capacity problem where the service  parameters $\mu_l, \mu_h$ and $T$ are considered as control decisions by the system manager, incorporate relevant costs for them and solving for the optimal control policy taking into account the customer response in equilibrium.
%

%

\end{document}